\def\fatx{\mathbf{x}}
\def\fatnu{\mathbf{\nu}}
\def\fatmu{\boldsymbol{\mu}}
\def\fatx{\mathbf{x}}
\def\fatnu{\boldsymbol{\nu}}
\def\kme{k_{\rm CK}}
\def\krme{k_r^{\rm meso}}
\def\krmi{k_r}
\def\kdme{k_d^{\rm meso}}
\def\kdmi{k_d}
\def\hhlratetwo{\rho^{(2)}}
\def\hhlratethree{\rho^{(3)}}
\def\hhlrated{\rho^{(d)}}
\def\hhlratetwofull{\rho^{(2)}(\krmi,h)}
\def\hhlratethreefull{\rho^{(3)}(\krmi,h)}
\def\hhlratedfull{\rho^{(d)}(\krmi,h)}
\def\micromrttwo{\tau^{(2)}_{\rm micro}}
\def\micromrtd{\tau^{(d)}_{\rm micro}}
\def\micromrtthree{\tau^{(3)}_{\rm micro}}
\def\mesomrtd{\tau^{(d)}_{\rm meso}}
\def\hstarirr{h^{\ast}_{\krmi}}
\def\hstarrev{h^{\ast}_{\krmi,\kdmi}}
\def\hstarinf{h^{\ast}_{\infty}}
\def\rradius{\sigma}
\def\diffconst{D}
\def\voxelsize{h}
\def\Gdfull{G^{(d)}(\voxelsize,\rradius)}
\def\Gd{G^{(d)}}
\def\Gthreefull{G^{(3)}(\voxelsize,\rradius)}
\def\Calphatwo{C_{2}}
\def\Calphathree{C_{3}}
\def\Calphad{C_{d}}
\def\numvox{N}
\def\voxsize{h}
\def\smolpdffull{p(\bold{r},t|\bold{r}_n,t_n)}
\def\smolpdf{p}
\def\smolinit{p(\bold{r},t_n|\bold{r}_n,t_n) = \delta(\bold{r}-\bold{r}_n)}
\def\smolboundaryshort{K\frac{\partial \smolpdf}{\partial n}\bigg|_{|\bold{r}|=\rradius} = \krmi \smolpdffull}
\def\Kdef{K = \begin{cases}
4\pi\sigma^2 \diffconst\quad (3D)\\
2\pi\sigma \diffconst\quad (2D).
\end{cases}}
\newtheorem{theorem}{Theorem}
\newtheorem{corollary}{Corollary}
\begin{document}

\begin{abstract}
The mesoscopic reaction-diffusion master equation (RDME) is a popular modeling framework, frequently applied to stochastic reaction-diffusion kinetics in systems biology. The RDME is derived from assumptions about the underlying physical properties of the system, and it may produce unphysical results for models where those assumptions fail. In that case, other more comprehensive models are better suited, such as hard-sphere Brownian dynamics (BD). Although the RDME is a model in its own right, and not inferred from any specific microscale model, it proves useful to attempt to approximate a microscale model by a specific choice of mesoscopic reaction rates. In this paper we derive mesoscopic scale-dependent reaction rates by matching certain statistics of the RDME solution to statistics of the solution of a widely used microscopic BD model: the Smoluchowski model with a Robin boundary condition at the reaction radius of two molecules. We also establish fundamental limits on the range of mesh resolutions for which this approach yields accurate results, and show both theoretically and in numerical examples that as we approach the lower fundamental limit, the  mesoscopic dynamics approach the microscopic dynamics. We show that for mesh sizes below the fundamental lower limit, results are less accurate. Thus, the lower limit determines the mesh size for which we obtain the most accurate results.
\end{abstract}

\title{Reaction rates for mesoscopic reaction-diffusion kinetics}
\author{Stefan Hellander}
\affiliation{Department of Computer Science, \mbox{University of California, Santa Barbara}, CA 93106-5070 Santa Barbara, USA.}
\author{Andreas Hellander}
\affiliation{\mbox{Department of Information Technology, Uppsala University}, Box 337, SE-75105, Uppsala, Sweden.}
\author{Linda Petzold}
\affiliation{Department of Computer Science, \mbox{University of California,~Santa Barbara}, CA 93106-5070 Santa Barbara, USA.}

\maketitle

\section{Introduction}

The reaction-diffusion master equation (RDME) is a commonly used mesoscopic model in the field of computational systems biology and it is a natural extension of the classical well-mixed Markov-process formalism for reaction kinetics \cite{gillespie}. Having a long history in the study of fluctuations in chemical reaction systems \cite{gardiner1976, Baras}, recently it has been successfully applied to study diverse biological phenomena such as yeast polarization \cite{Lawson:2013}, pattern formation in \emph{E. Coli} \cite{Howard,FaEl}, and noisy oscillations of Hes1 in embryonic stem cells \cite{Sturrock1:2013, Sturrock2:2013}. 

In the RDME framework, spatial heterogeneity is modeled by dividing space into voxels in a computational mesh, where molecules are assumed to be well mixed 
 inside each voxel. In individual voxels, reactions are simulated using the Stochastic Simulation Algorithm (SSA) \cite{gillespie}, while diffusion is accounted for through discrete jumps of molecules between voxels. Discrete diffusion and well-mixed SSA are combined in the Next Subvolume Method (NSM) \cite{ElEh04}, an efficient kinetic Monte Carlo method. For moderate mesh resolutions, simulations of the RDME are typically orders of magnitude faster than microscopic particle-tracking models, such as the popular Green's Function Reaction Dynamics (GFRD) algorithm \cite{ZoWo5a,ZoWo5b,SHeLo11}. This contributes to the popularity of the method for applications where the system of interest needs to be studied on the minute to hour timescales that are typical for cellular events like gene expression, signaling and cell division.  The RDME underlies software for spatial stochastic simulation such as MesoRD \cite{mesord}, URDME \cite{URDME_BMC}, pyURDME (www.pyurdme.org), and STEPS \cite{steps}. 

Despite the proven usefulness of the RDME---and the extensive work put into speeding up simulations using approximate \cite{RosBayKou,marquez-lago:104101,LaGiPe} and hybrid  \cite{FeHeLo2009} methods, as well as extending it to include additional transport phenomena \cite{HeLo:2010, Bayati:2013} and to simulate it on complex geometries \cite{IsP,EnFeHeLo}---its fundamental numerical properties and its ability to approximate microscopic particle tracking models at high mesh resolution remains poorly understood. In order to discuss the accuracy of the RDME on small length- and timescales, we need to specify a fine-scaled alternative as our gold standard. A microscale model often utilized for that purpose is the Smoluchowski model, in which particles are modeled by hard spheres that diffuse according to Brownian motion, and reactions are modeled by a partially absorbing boundary condition at the surface of the spheres. This model has a long history in chemical physics, going back to ideas by Smoluchowski \cite{Smol}. In systems biology, the Smoluchowski model is being popularized through software packages such as E-Cell \cite{Tomita01011999} and Smoldyn \cite{AnAdBrAr10}. This paper is concerned with the accuracy of the RDME when viewed as an approximation to the Smoluchowski model.

The principal way in which the mesoscale and the microscale are connected is through the mesoscopic bimolecular reaction constant. A classical result by Collins and Kimball  \cite{Kimball} provides effective rates in terms of the microscopic, intrinsic, reaction parameters of the Smoluchowski model. When scaled by the volume of the voxels, they can be used as approximate mesoscale reaction rates in the RDME for simulations in 3D. The same constant was derived more recently from first principle physics by Gillespie \cite{gillespie:164109}. The constant is valid when voxels are large in comparison to the molecules. Since, in the conventional implementation, reactions occur only between molecules occupying the same voxel, the average time until molecules react diverges with vanishing size of the voxels \cite{Isaacson2, ErCha09,HHP}. A consequence is a lower bound on the size of the voxels, below which no mesoscopic reaction rate can be chosen so that the average reaction times match in the RDME and Smoluchowski models \cite{HHP}.

In previous work \cite{HHP} we analyzed the scenario of a single, irreversible, bimolecular reaction on a Cartesian mesh. For the case of perfect absorption, we obtained analytical lower limits on the mesh size in both 2D and 3D. Above those limits, it is theoretically possible to construct a mesoscopic rate such that the mean binding time in the two models match. Below that critical mesh size, no such rate can be constructed. Hence, in the presence of bimolecular reactions, a fundamental limitation of the RDME results from the inherent bound on the accuracy to which we can represent diffusion. As a direct consequence of our previous analysis, we also obtained mesoscopic reaction rates which ensure that the mean binding time in the RDME matches that of the Smoluchowski model for an irreversible bimolecular reaction.  

In this paper we extend our previous analysis. First, we study the case of reversible reactions, and ask whether there are additional constraints on the admissible mesh sizes compared to the irreversible case. We derive a critical voxel size under the following three assumptions: the average time until a reaction fires should match between mesoscopic and microscopic models, the steady state levels should match, and the mesoscopic dissociation rate should be smaller than or equal to the intrinsic dissociation rate. The last condition is necessary for the dissociation to make physical sense, and if we are to match not only equilibrium distributions but also transient solutions. The result establishes the previously obtained critical voxel sizes---in the perfectly absorbing, irreversible, case---as a fundamental lower limit for the more general reversible case. Importantly, this means that there will be a non-trivial lower limit for the mesh size, independent of the intrinsic microscopic reaction parameters. We also study the accuracy of our reaction rates as compared to the Smoluchowski model, and show good agreement, both at steady state and during the transient phase, between mesoscopic and microscopic simulations as the mesh size approaches the critical lower limit. In particular, we show how the multiscale propensities can provide a better mesoscopic approximation to a diffusion limited model of a MAPK cascade, compared to the widely used propensities of Collins and Kimball \cite{Kimball}. It has not previously been possible to accurately simulate this model with a fully local RDME, although it has been simulated successfully with a non-local extension of the RDME \cite{FBSE10}, and using a hybrid microscale-mesoscale method \cite{HeHeLo}.

For simplicity we have chosen not to write out units explicitly. We are using SI units throughout.

\section{Background}

A system of $N$ molecules of $S$ different chemical species, diffusing and reacting inside a finite reaction volume, can be modeled at several different scales, and the accuracy of the different models depends on the properties of the system. In this section we briefly review the microscopic Smoluchowski model and the mesoscopic RDME model, and discuss previous work on connecting the two models. 

\subsection{Microscopic scale: the Smoluchowski model}

Several microscopic models have been proposed and studied in some detail \cite{ErCha09,SamDoiSDLR,AnB}. We have chosen to focus on the Smoluchowski model, given the extensive attention it has received for instance in \cite{ZoWo5a,ZoWo5b,SHeLo11}. In the Smoluchowski framework, molecules are modeled as hard spheres. The radii of the spheres are referred to as the reaction radius of the molecules. The Smoluchowski equation, extended with a Robin boundary condition at the sum of the reaction radii, determines the probability of a reaction occurring between colliding molecules.

Let $\mathbf{x}_1$ and $\mathbf{x}_2$ be the positions of two molecules. Consider their relative position $\mathbf{r}=\mathbf{x}_1-\mathbf{x}_2$. The governing equation is given by
\begin{align}
\label{eq:smol1}
\frac{\partial \smolpdf}{\partial t} = D\Delta \smolpdffull,
\end{align}
where $\smolpdffull$ is the probability distribution function (PDF) of the relative position $\mathbf{r}$ at time $t$, given the relative position $\mathbf{r}_n$ at time $t_n$, and where $D$ is the sum of the diffusion constants of the molecules.  The boundary condition is given by
\begin{align}
\label{eq:smol2}
\smolboundaryshort,
\end{align}
where
\begin{align}
\label{eq:smol3}
\Kdef
\end{align}
Here $\rradius$ is the sum of the reaction radii of the molecules, and $\krmi$ is the microscopic, intrinsic, reaction rate. The initial condition is $\smolinit$.

To update a pair of molecules we solve for $\smolpdffull$, and then sample the new relative position at time $t$. Single molecules are updated by sampling from a normal distribution in all directions. The time until a dissociation is sampled from an exponential distribution, and the resulting products are placed at a distance equal to $\rradius$.  A system of $N$ molecules becomes an $N$-body problem, and a direct solution is generally unattainable. Instead we can simulate the system with the Green's function reaction dynamics (GFRD) algorithm \cite{ZoWo5a,ZoWo5b}. The core of the algorithm is a reduction of the problem to a collection of single- and two-body systems, accomplished through an appropriate restriction of the time step of the algorithm. In \cite{HeHeLo} the GFRD algorithm was extended to include complex boundaries, and in \cite{SHeLo11} improvements were suggested with the aim of making the algorithm more flexible and efficient. 

The GFRD algorithm is efficient for dilute systems where the free space between molecules allows for an efficient grouping in pairs while using a relatively large time step, and the computational benefit over brute-force BD methods can be orders of magnitude. If the system contains some species that are present in higher copy numbers, or if a very high spatial resolution is not required, the mesoscopic RDME can in turn be orders of magnitude faster than GFRD. 

\subsection{Mesoscopic scale: The reaction-diffusion master equation}

The RDME extends the classical well-mixed Markov-process model \cite{gillespie, GillespieRig} to the spatial case by introducing a discretization of the domain into $\numvox$ non-overlapping voxels \cite{Gardiner}. Molecules are point particles and the state of the system is the discrete number of molecules of each of the species in each of the voxels. A common choice for the discretization is a uniform Cartesian lattice, where each voxel is a square with area $h^2$ (2D), or a cube with volume $h^3$ (3D). Simulations can also be conducted on unstructured triangular and tetrahedral meshes for better geometric flexibility \cite{EnFeHeLo}. The RDME is the forward Kolmogorov equation, governing the time evolution of the probability density of the system.  

For brevity of notation, we write $p(\fatx,t) = p(\fatx,t|\fatx_0,t_0)$ for the probability that the system can be found in state $\fatx$ at time $t$, conditioned on the initial condition $\fatx_0$ at time $t_0$. For a general reaction-diffusion system, the RDME can be written as

\begin{align}
\label{eq:rdme}
\frac{\mathrm{d}}{\mathrm{dt}}p(\fatx, t) = 
&\sum_{i=1}^{K}
\sum_{r = 1}^{M}
a_{ir}(\fatx_{i \cdot}-\fatmu_{ir})p(\fatx_{1 \cdot},\ldots,\fatx_{i \cdot}-\fatmu_{ir},
\ldots,\fatx_{K \cdot}, t) \nonumber 
-\sum_{i=1}^{K}
\sum_{r = 1}^{M}
a_{ir}(\fatx_{i \cdot})p(\fatx, t)\\
&+\sum_{j=1}^{N} \sum_{i = 1}^{K} \sum_{k=1}^K d_{jik}(\fatx_{\cdot j}-\fatnu_{ijk})
p(\fatx_{\cdot 1},\ldots,\fatx_{\cdot j}-\fatnu_{ijk},
\ldots,\fatx_{\cdot N}, t) \nonumber\\
&-\sum_{j=1}^N\sum_{i=1}^{K}
\sum_{k = 1}^{K} d_{ijk}(\fatx_{\cdot j})p(\fatx, t),\nonumber\\
\vspace{-30pt}
\end{align}
\noindent
where $\fatx_{i\cdot}$ denotes the $i$-th row and $\fatx_{\cdot j}$ denotes the $j$-th column of the $K\times S$ state matrix $\fatx$ where $S$ is the number of chemical species. The functions $a_{ir}(\fatx_i)$ define the propensities of the $M$ chemical reactions, $\fatmu_{ir}$ are stoichiometry vectors associated with the reactions.  $a_{ir}(\fatx)$ and $\fatmu_{ir}$ have the same meaning as for a well mixed system but are now defined for each of the voxels. $d_{ijk}(\fatx_i)$ are propensities for the diffusion jump events, and $\fatnu_{ijk}$ are stoichiometry vectors for diffusion events. $\fatnu_{ijk}$ has only two non-zero entries, corresponding to the removal of one molecule of species $X_k$ in voxel $i$ and the addition of a molecule in voxel $j$. The RDME is too high-dimensional to permit a direct solution. Instead realizations of the stochastic process are  sampled, using algorithms similar to the SSA but optimized for reaction-diffusion systems \cite{ElEh04}.

The propensity functions for the diffusion jumps, $d_{ijk}$, are selected to provide a consistent and local discretization of the diffusion equation, or equivalently the Fokker-Planck equation for Brownian motion \cite{GillHellPetz}. For a uniform Cartesian grid, a finite difference discretization results in diffusion jumps with propensities 
\begin{align}
X_{is} \xrightarrow{d_{ijs}} X_{js}, \quad d_{ijs} = \frac{\gamma_s}{h^2},
\end{align}
\noindent
where $\gamma_s$ is the diffusion coefficient of $X_s$ and $d_{ijs}$ is non-zero only for adjacent voxels. 
For a triangular or tetrahedral unstructured mesh, finite element or finite volume discretizations result in propensities that account for the shape and size of each voxel \cite{EnFeHeLo}. With this model of diffusion, setting reactions aside, the solution of the RDME will, with vanishing voxel sizes, converge in probability to Brownian motion. 

In the case of mass action kinetics, the propensity functions $a_{ir}(\fatx)$ for bimolecular reactions take the form
\begin{align}
a_{ir}(\fatx_i) = \krme x_{is}x_{is^{'}}.
\end{align}
We will refer to $\krme$ as the \emph{mesoscopic association rate}. In practical modeling work, $\krme$ is often obtained by fitting the model to some phenotypic experimental observation, or provided directly as a macroscopic or mesoscopic model parameter obtained from experiments. If the association rate is instead given in terms of the microscopic reaction rate, $\krmi$, a result of Collins and Kimball \cite{Kimball} provides the effective rate, $\kme$, of the system, which can be used for mesoscale simulations in 3D through $\krme=\kme/h^3$, where
\begin{align}
\kme = \frac{4\pi\rradius\diffconst\krmi}{4\pi\rradius\diffconst+\krmi}. 
\label{eq:ck}
\end{align} 

Gillespie derives the same relation by applying classical results from gas kinetics, and calls it the diffusional propensity function \cite{gillespie:164109}. From Gillespie's physically rigorous derivation it is possible to relate the intrinsic reaction rate $\krmi$ to fundamental physical constants.  

A natural question, given model parameters and a mesh size, is how well a mesoscopic simulation can capture the microscale dynamics. It has been shown that the solution of the RDME diverges with respect to the Smoluchowski model \cite {Isaacson,HHP}. Due to the point particle assumption, bimolecular reactions occur with successively lower probability, to eventually vanish in the limit of small voxels.

This means that there is a lower bound on the voxel size, below which bimolecular reactions cannot be accurately simulated in the RDME. There is also an upper bound on the voxel size, above which the reaction-diffusion dynamics will be insufficiently resolved. The question of how to choose the voxel size for sufficient accuracy has not yet been satisfactorily answered, but some attempts at establishing lower bounds on the voxel size have been made. A trivial bound on the voxel size follows from physical arguments \cite{GPS:2014}; the voxels must be large enough for the reacting molecules to remain dilute and well mixed inside the voxel. This condition translates to
\begin{align}
h \gg \rradius.
\label{eq:trivialcondition}
\end{align}

Apart from physical common sense, the above condition is an explicit assumption in the derivation of \eqref{eq:ck}. Only if this assumption is valid can the rate constant \eqref{eq:ck} be expected to provide an accurate simulation with respect to the Smoluchowski model. Unfortunately, condition \eqref{eq:trivialcondition} offers little guidance on how to choose the mesh size in practice. 

The reaction constant \eqref{eq:ck} depends on the microscopic parameters but not on the spatial discretization. By allowing the rate to depend on the mesh, and by matching certain properties of the microscopic model, it is possible to derive alternative forms for $\krme$ that perform better than \eqref{eq:ck} for diffusion limited systems and for fine meshes \cite{FBSE10, HHP}. Those approaches also yield constants in 2D, where no expression based on a physical derivation is available. In \cite{FBSE10} the rates are derived by matching the mean equilibration time of a reversible reaction on a spherical discretization. These rates are then used on Cartesian lattices.

Let $\micromrtd(\krmi)$ be the mean binding time for two molecules in the Smoluchowski model in dimension $d$. In the case of a single irreversible association reaction on a uniform Cartesian discretization of a square or cube of side length $L$, Hellander et al. \cite{HHP} showed that if the mesoscopic propensity $\krme$ is chosen as $\krme=\hhlrated$, where
\begin{align}
\hhlratedfull=\left\{\begin{array}{ll}
\frac{\left(L/h\right)^2}{\micromrtd(\krmi)-[\frac{L^2}{2\pi D}\log\left(\frac{L}{h}\right)+\frac{0.1951L^2}{4D}]} & d=2\\
\frac{(L/h)^3}{\micromrtd(\krmi)-1.5164L^3/(6Dh)} & d=3,
\end{array}\right.
\label{eq:ourcorr}
\end{align}
\noindent 
then the mean binding time on the mesoscopic scale will match the mean binding time in the Smoluchowski model. For $k_r\to\infty$, this is possible down to $\hstarinf\approx 3.2\rradius$ in 3D, and $\hstarinf\approx 5.1\rradius$ in 2D. Below $\hstarinf$, the matching of the mean binding time will not be possible in the perfectly absorbing case. For simple geometries, such as a disk or a sphere, $\micromrtd$ can be obtained analytically. For other geometries, provided that $h\ll L$, the analytical result for a disk or a sphere with matching volume provides an excellent approximation. The analytical lower bounds on the voxel sizes are obtained by considering the extreme case of $\krmi \to \infty$ and using the above mentioned approximation for $\micromrtd$. In \cite{ErCha09}, based on another assumption not involving the microscopic Smoluchowski model, Erban and Chapman arrived at an expression in 3D that is similar to \eqref{eq:ourcorr} and that establishes the same critical mesh size. 

In what follows, we set out to expand on this theory with the aim of an improved understanding of the range of the mesh sizes for which the RDME will accurately approximate the Smoluchowski model. In particular, we derive critical mesh sizes for more realistic kinetics like reversible bimolecular reactions, and we show that under mild assumptions, the rates \eqref{eq:ourcorr} are effectively independent of $L$. We also obtain error estimates that provide a way to estimate the needed mesh resolution to achieve a certain accuracy in the rebinding time distributions.

\section{Results}

\label{sec:difftotarget}

The case of a single irreversible reaction was studied in \cite{HHP}. The analysis provided analytical lower bounds on the mesh size in 2D and 3D only for the case of $\krmi\to\infty$, and reaction rates for matching mean association times. The reaction rates depended on the size $L$ of the domain. As reactions occur locally in space, that dependence is not intuitive. 

In this section we expand on that theory. First we show that the reaction rates are independent of the size of the domain, under the assumption that the domain is much larger than the molecules. From this follows analytical lower bounds on the mesh size for the case of an irreversible reaction with $0<\krmi\leq \infty$.

Second, we study a reversible reaction on a square or cubical domain and proceed to derive mesoscopic reaction rates under the following three assumptions: the average reaction time should match between mesoscopic and microscopic models, the steady state levels should match, and the intrinsic dissociation rate should be larger than or equal to the mesoscopic dissociation rate. Under these assumptions we derive a lower bound on the mesh size independent of the reaction rates.

We also show how the multiscale mesoscopic reaction rates behave in the limit of small voxels as well as in the limit of large voxels, effectively connecting the microscopic and mesoscopic scales.  Finally, we provide error estimates that relate the mesh size to the error in rebinding-time distributions, and show that the mean mesoscopic rebinding time approaches the mean microscopic rebinding time, as the mesoscopic mesh size approaches the lower bound.

\subsection{Irreversible reactions}
\label{sec:local_irreversible}
The analytical expressions for the lower bounds on the voxel size $h$ derived in \cite{HHP} are valid for the case of irreversible reactions with perfect absorption. In this section we assume $L\gg\sigma$, to obtain analytical expressions for the lower bounds in the general case of $k_r>0$. We show that the reaction rates are independent of the size $L$ of the domain, thus depending on local parameters only.

\begin{theorem}
Let $\hhlrated$ be the mesoscopic reaction rate in dimension $d$, and assume that $L\gg\sigma$. Then
\begin{align}
\label{eq:hhlrate}
\hhlratedfull \approx \frac{\krmi}{h^d}\left( 1+\frac{\krmi}{D}\Gd (h,\sigma) \right)^{-1}
\end{align}
where $d\in \{2,3\}$, and
\begin{align}
\Gdfull = \begin{cases}
\frac{1}{2\pi}\log\left(\pi^{-\frac{1}{2}}\frac{h}{\sigma}\right)-\frac{1}{4}\left(\frac{3}{2\pi}+\Calphatwo\right)\quad (2D)\\
\frac{1}{4\pi\sigma}-\frac{\Calphathree}{6h}\quad (3D),
\label{eq:gdfull}
\end{cases}
\end{align}
where
\begin{align}
\Calphad \approx \begin{cases}
0.1951,\quad d=2\\
1.5164, \quad d=3.
\end{cases}
\end{align}
Let 
\begin{align}
\label{eq:hstarirr_def}
\hstarirr=\inf_{h}\{\hhlrated(\krmi,h)>0 \}. 
\end{align}
Then $\hhlrated$ is a well-defined reaction rate only for $\voxsize > \hstarirr$, and
\begin{align}
\label{eq:hstarrevexpr}
\hstarirr = \begin{cases}
\sqrt{\pi}e^{\frac{3+2\Calphatwo\pi}{4}-\frac{2\pi D}{\krmi}}\rradius\quad (2D)\\
\frac{\Calphathree}{6}\left( \frac{D}{\krmi}+\frac{1}{4\pi\sigma}\right)^{-1} (3D).
\end{cases}
\end{align}

\end{theorem}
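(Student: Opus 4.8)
The plan is to take the rate \eqref{eq:ourcorr} from \cite{HHP} as given and to insert the leading-order behaviour of the microscopic mean binding time $\micromrtd(\krmi)$ valid when $L\gg\sigma$. Writing $T_{\mathrm{diff}}$ for the bracketed mesoscopic diffusion time subtracted in \eqref{eq:ourcorr}, the whole statement reduces to producing an explicit $L\gg\sigma$ expansion of $\micromrtd(\krmi)$; the claimed form \eqref{eq:hhlrate} then follows by elementary algebra, and the powers of $L^d$ cancel between numerator and denominator, which is exactly the asserted independence of the rate from the domain size.

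The essential input is thus an asymptotic expansion of $\micromrtd(\krmi)$, which I would obtain from the mean first passage time problem $\diffconst\Delta\tau=-1$ posed on a disk ($d=2$) or ball ($d=3$) of volume $L^d$ (matching the Cartesian domain), with a reflecting condition on the outer boundary and the radiation condition \eqref{eq:smol2} on $|\fatr|=\rradius$. Solving the radial equation together with the outer reflecting condition fixes $\tau'$ independently of the inner boundary condition, so the radiation condition only shifts $\tau$ by an additive constant relative to the perfectly absorbing case. Comparing the two solutions at $|\fatr|=\rradius$ yields the clean decomposition
\begin{align}
\label{eq:mfptsplit}
\micromrtd(\krmi)=\micromrtd(\infty)+\frac{L^d}{\krmi}+o(L^d),
\end{align}
in which the finite reactivity contributes the reaction-limited term $V/\krmi$ with $V=L^d$, as in the Collins--Kimball series-resistance picture. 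Averaging the perfectly absorbing solution over a uniform initial separation then gives the leading behaviour $\micromrtthree(\infty)\approx L^3/(4\pi\rradius\diffconst)$ in 3D and $\micromrttwo(\infty)\approx\frac{L^2}{2\pi\diffconst}\log\!\big(\pi^{-1/2}L/\rradius\big)-\frac{3L^2}{8\pi\diffconst}$ in 2D.

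Substituting \eqref{eq:mfptsplit} into \eqref{eq:ourcorr} and setting $\Gd:=\frac{\diffconst}{L^d}\big(\micromrtd(\infty)-T_{\mathrm{diff}}\big)$ gives
\begin{align}
\hhlratedfull\approx\frac{(L/h)^d}{L^d/\krmi+(L^d/\diffconst)\Gd}=\frac{\krmi}{h^d}\Big(1+\frac{\krmi}{\diffconst}\Gdfull\Big)^{-1},
\end{align}
which is \eqref{eq:hhlrate}; evaluating $\Gd$ with the expressions above reproduces \eqref{eq:gdfull} in both dimensions, the 2D constant emerging from the identity $\tfrac14(\tfrac{3}{2\pi}+\Calphatwo)=\tfrac{3}{8\pi}+\tfrac{\Calphatwo}{4}$, where $\tfrac{3}{8\pi}$ comes from the microscopic average and $\tfrac{\Calphatwo}{4}$ from $T_{\mathrm{diff}}$.

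For the bound, since $\krmi,h>0$ the sign of $\hhlrated$ is controlled entirely by $1+\frac{\krmi}{\diffconst}\Gdfull$. By \eqref{eq:gdfull}, $\Gdfull$ is continuous and strictly increasing in $h$, with $\Gdfull\to-\infty$ as $h\to0^+$ and a positive supremum ($1/(4\pi\rradius)$ in 3D, $+\infty$ in 2D); since the required value $-\diffconst/\krmi$ is negative, the equation $1+\frac{\krmi}{\diffconst}\Gdfull=0$ has a unique root, and $\hhlrated>0$ precisely for $h$ above it. That root is $\hstarirr$: solving linearly in $1/h$ in 3D and by exponentiation in 2D gives the closed forms in \eqref{eq:hstarrevexpr}. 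I expect the main obstacle to be the asymptotics of $\micromrtd(\infty)$ to the required subleading order---the $L^d$ scaling is routine, but the constant terms (in particular the $-3/(8\pi)$ contribution in 2D from $\langle r^2\rangle$ and $\langle\log(r/\rradius)\rangle$ over the uniform initial distribution) must be kept exactly and all discarded terms verified to be $o(L^d)$, since these constants are what combine with the lattice constants $\Calphad$ to form $\Gdfull$. The bound itself is then a short monotonicity-and-root argument.
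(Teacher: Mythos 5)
Your proposal is correct, and its skeleton --- substitute the $L\gg\sigma$ asymptotics of $\micromrtd(\krmi)$ into \eqref{eq:ourcorr}, observe that the powers of $L$ cancel, and locate $\hstarirr$ as the unique root of $1+(\krmi/\diffconst)\Gd=0$ by monotonicity of $\Gd$ in $h$ --- coincides with the paper's. Where you genuinely differ is in the provenance of the key input: the paper solves no boundary value problem, but instead invokes in 3D the well-mixed Collins--Kimball approximation $\micromrtthree\approx L^3/\kme$ with $\kme$ from \eqref{eq:ck}, and in 2D quotes the disk formula of Fange et al.\ \cite{FBSE10}, $\micromrttwo\approx[1+\alpha F(\lambda)]L^2/\krmi$, which it then expands for $\lambda=\sqrt{\pi}\rradius/L\to 0$. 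You instead derive both inputs from first principles via the radial MFPT problem with reflecting outer and radiation inner boundary; your observation that $\tau'$ is fixed independently of the inner condition, so that finite reactivity contributes only the additive term $\approx L^d/\krmi$, is correct, and your decomposition is exactly equivalent to the paper's inputs: in 3D the identity $1/\kme=1/\krmi+1/(4\pi\rradius\diffconst)$ turns $L^3/\kme$ into your split with $\micromrtthree(\infty)\approx L^3/(4\pi\rradius\diffconst)$, and in 2D your constants agree with the limit $F(\lambda)\to\log(1/\lambda)-3/4$, whose second term is precisely your $-3L^2/(8\pi\diffconst)$. A bonus of your route is that the additive split $\micromrtd(\krmi)-\micromrtd(\infty)\approx L^d/\krmi$ is exactly the estimate \eqref{eq:micro_estimate} that the paper later uses without derivation in the reversible analysis, so you prove it along the way; the cost is that you must carry the subleading constants of $\micromrtd(\infty)$ exactly, as you note. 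One point to make explicit if you write it out: in 3D, averaging the absorbing solution over a uniform initial position leaves remainders of order $L^2/\diffconst$, and these are negligible against the subtracted mesoscopic term $\Calphathree L^3/(6\diffconst h)$ only because $h\ll L$ --- an assumption already underlying \eqref{eq:ourcorr}, so the argument closes. Your root computations reproduce \eqref{eq:hstarrevexpr} in both dimensions.
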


\begin{proof}
In \cite{HHP}, the reaction rate $\hhlrated$ was derived under the assumption that $L\gg\voxsize$. This effectively implies that $L\gg\sigma$, since we have adopted the basic assumption \eqref{eq:trivialcondition}. Let $\micromrtd$ and $\mesomrtd$ be the mean association times for uniformly distributed particles in dimension $d$. While individual voxels may be too small for the Collins and Kimball approximation to be valid,  the system as a whole may be well-mixed and dilute. Thus, if we assume that $L\gg\sigma$ we can approximate the global mean reaction time $\micromrtthree$ by
\begin{align*}
\micromrtthree \approx \frac{L^3}{\kme},
\end{align*}
which, when inserted into \eqref{eq:ourcorr}, yields
\begin{align*}
\hhlratethreefull = \frac{h^{-3}}{\kme^{-1}-\frac{\Calphathree}{6Dh}}.
\end{align*}
With $\kme$ defined by \eqref{eq:ck}, we obtain $\hhlratethree$ as in \eqref{eq:hhlrate}. Thus, for large enough domains, the effect of the outer boundary is small in 3D, and the reaction rates are defined by local parameters only. 

The situation in 2D is more complicated. In \cite{FBSE10} they derive $\micromrttwo$ for a disk. Assuming $L\gg\sigma$, this will provide an excellent estimate of $\micromrttwo$ in the case of a square. Thus
%From \cite{FBSE10} we know that
\begin{align*}
\micromrttwo \approx \frac{[1+\alpha F(\lambda)]L^2}{\krmi},
\end{align*}
where
\begin{align*}
\begin{cases}
\lambda = \pi^{\frac{1}{2}}\frac{\rradius}{L}\\
\alpha = \frac{\krmi}{2\pi D} \\
F(\lambda) = \frac{\log(1/\lambda)}{(1-\lambda^2)^2}-\frac{3-\lambda^2}{4(1-\lambda^2)},
\end{cases}
\end{align*}
yields
\begin{align*}
\hhlratetwofull &\approx \frac{\voxsize^{-2}}{\frac{1+\alpha F(\lambda)}{\krmi}-\left( \frac{1}{2\pi \diffconst}\log(\frac{L}{\voxsize})+\frac{\Calphatwo}{4\diffconst} \right)} \\ &=
\frac{1}{\voxsize^2}\left( (\krmi)^{-1}+ \frac{1}{2\pi \diffconst}\left\{\frac{\log(\lambda^{-1})}{(1-\lambda^2)^2}-\frac{(3-\lambda^2)}{4(1-\lambda^2)^2}\right\}-\frac{1}{2\pi \diffconst}\log\left(\frac{L}{\voxsize}\right)- \frac{\Calphatwo}{4\diffconst} \right)^{-1},
\end{align*}

For $L\gg\rradius$ we have $1-\lambda^2\approx 1$, and $3-\lambda^2\approx 3$. We then get
\begin{align}
\label{eq:hhl_2D_eq}
\hhlratetwofull \approx \frac{1}{\voxsize^2}\left( (\krmi)^{-1}+\frac{1}{2\pi \diffconst} \left\{ \log \left( \pi^{-\frac{1}{2}}\frac{L}{\rradius}\right)-\log\left( \frac{L}{\voxsize} \right) \right\}-\frac{1}{4\diffconst}\left(\frac{3}{2\pi}+\Calphatwo\right) \right)^{-1}.
\end{align}
By noting that
\begin{align*}
\log\left( \pi^{-\frac{1}{2}}\frac{L}{\sigma}\right)-\log\left(\frac{L}{h}\right) = \log\left(\pi^{-\frac{1}{2}}\frac{h}{\sigma}\right),
\end{align*}
we find that we can rewrite \eqref{eq:hhl_2D_eq} to obtain \eqref{eq:hhlrate} also in the case of $d=2$. Consequently, for $L\gg\sigma$, the reaction rate defined by $\hhlratetwo$ is approximately independent of the global parameter $L$. 

Now \eqref{eq:hstarrevexpr} follows by noting that $\hhlrated>0$ holds if and only if
\begin{align}
\label{eq:rate_cond1}
\frac{\krmi}{\diffconst}\Gdfull>-1,
\end{align}
and then solving $\frac{\krmi}{\diffconst}\Gdfull=-1$ for $h$.
\end{proof}

\subsection{Reversible reactions}
\label{sec:local}

In this section we extend the analysis to the reversible case. We limit our considerations to the conventional RDME, thus allowing reactions only between molecules occupying the same voxel. The system consists of one $A$-molecule and one $B$-molecule, diffusing on a Cartesian lattice consisting of $N$ voxels of width $h$, with periodic boundary conditions. The molecules react reversibly through the reaction
\begin{align}
\label{revreaction}
A+B \overset{\krme}{\underset{\kdme}{\rightleftharpoons}} C.
\end{align}

\noindent 
Henceforth we assume that $\krmi$, the microscopic association rate, and $\kdmi$, the microscopic dissociation rate, are given model parameters, and that $\krme$ is defined by \eqref{eq:hhlrate}.

It remains to define $\kdme$.  A plausible approach would be to match the steady-state levels of the species at the mesoscopic and microscopic scales. For a system of one $A$- and one $B$-molecule, the steady state is governed by the ratio of the average time the molecules are unbound to the total time. Thus, to match the steady state at the different scales, we choose $\kdme$ such that
\begin{align}
\label{eq:steadystate}
\frac{\tau_{rebind}^{meso}}{\tau_d^{meso}+\tau_{rebind}^{meso}} = \frac{\tau_{rebind}^{micro}}{\tau_d^{micro}+\tau_{rebind}^{micro}},
\end{align}
where $\tau_{rebind}^{meso}$ and $\tau_{rebind}^{micro}$ are the respective times until an $A$- and a $B$-molecule rebind, given that they have just dissociated. The quantities $\tau_d^{meso}$ and $\tau_d^{micro}$ are the average times until a $C$-molecule dissociates (thus $\tau_d^{meso}=1/k_d^{meso}$ and $\tau_d^{micro} = 1/k_d$).

We can compute $\tau_{rebind}^{meso}$ analytically by expressing it in terms of the mean binding time $\mesomrtd$. We note that the rebinding time is given by 
\begin{align}
\label{eq:meso_rebind}
\tau_{rebind}^{meso}(\krme) = \mesomrtd(\krme)-\mesomrtd(\infty).
\end{align}
When the molecules have reached the same voxel, the system is in the same state as immediately following a dissociation. Thus, by subtracting from the total mean binding time the time it takes to reach the same voxel, we are left with the average rebinding time.
% Subtracting the time it takes to reach the same voxels from the total mean binding time, what remains is simply the rebinding time.

% Let $\tau_e := 1/(k_r^{meso}+2dh)$, where $d$ is the dimension. Then $\tau_e$ is the average time until the next event, given that the molecules are in the same voxel. Now $\tau_{rebind}^{meso}$ satisfies
% \begin{align}
% \label{eq:tau_rebind}
% \tau_{rebind}^{meso} = \frac{k_r^{meso}}{k_r^{meso}+2dh}\frac{1}{k_r^{meso}}+\frac{2dh}{k_r^{meso}+2dh}\left(\tau_e+(N-1)\frac{1}{2dh}+\tau_{rebind}^{meso}\right),
% \end{align}
% where the first term on the right hand side represents the probability that the first event after a dissociation is an association event, and where the second term represents the probability that the first event is a diffusion event (note that $N-1$ is the average number of steps until the molecules again occupy the same voxel \cite{HHP}.)

% Solving \eqref{eq:tau_rebind} for $\tau_{rebind}^{meso}$ yields
% \begin{align}
% \label{eq:tau_rebind_solved}
% \tau_{rebind}^{meso} = \frac{N}{\krme}.
% \end{align}
Expressions for $\mesomrtd$ are given in \cite{HHP}, and inserting them into \eqref{eq:meso_rebind} yields
\begin{align}
\label{eq:tau_rebind_solved}
\tau_{rebind}^{meso} = \frac{N}{\krme}.
\end{align}

Unfortunately $\tau_{rebind}^{micro}$ is not easily computed by analytical means (for general geometries), but by noting that similarly as in the mesoscopic case, $\tau_{rebind}^{micro}(k_r)=\micromrtd(k_r)-\micromrtd(\infty)$ we obtain the estimate
\begin{align}
\label{eq:micro_estimate}
\tau_{rebind}^{micro} \approx \frac{L^d}{\krmi}
\end{align}
for $L\gg\sigma$. The argument is the same as for the mesoscopic case; $\micromrtd(\infty)$ represents the time until two molecules are in contact for the first time. By subtracting that time from the total time for the two molecules to react given a uniform initial distribution, we are left with the rebinding time. By using \eqref{eq:tau_rebind_solved} and \eqref{eq:micro_estimate} in  \eqref{eq:steadystate} it follows that
\begin{align}
\frac{\frac{N}{\krme}}{\frac{1}{\kdme}+\frac{N}{\krme}} = \frac{\frac{L^d}{\krmi}}{\frac{1}{\kdmi}+\frac{L^d}{\krmi}},
\end{align}
which, after some simplifications, results in 
\begin{align}
\label{eq:meso_k_d}
\kdme = \voxsize^d\frac{\kdmi\krme}{\krmi}.
\end{align}
When rearranged, \eqref{eq:meso_k_d} can be recognized as the detailed balance condition. Maintaining this relation between the rate constants is sufficient to ensure that the equilibrium values of $A$ and $B$ are the same in the two models. 

However, though we can match the mean association time as well as the equilibrium values for a given voxel size, we may have a mesoscopic dissociation rate that is faster than the microscopic dissociation rate. That would result in more dissociation events on the microscopic scale, and since the microscopic scale is assumed to be more fine-grained, this is unphysical. This can be seen by considering that the inverse of the mesoscopic dissociation rate is a combination of the expected time for a complex to break apart and the time for the products to become well-mixed by diffusion inside a voxel. The mesoscopic dissociation rate thus includes the possibility of fast microscopic rebinding events that occur before the molecules have become well-mixed. Thus, while matching the mean association time and the equilibrium values of the molecules, we may end up with incorrect transient dynamics due to too fast dissociations on the mesoscopic scale. 
%However, even if we match the mean association time for a given voxel size, we may be in a region where the mesoscopic dissociation rate is higher than the microscopic, intrinsic, rate. This is unphysical and may lead to incorrect transient dynamics compared to the microscopic model. The inverse of the mesoscopic dissociation rate is a combination of the expected time for a complex to break apart and the time for the products to become separated a certain length scale due to diffusion. 
The intrinsic microscopic dissociation rate is therefore an upper bound on the mesoscopic rate. As a consequence we want to determine for what size of $\voxsize$ we can match the mean binding times while satisfying
\begin{align}
\label{eq:diss_cond}
\kdme \leq \kdmi.
\end{align}

\begin{theorem}
The condition $\kdme \leq \kdmi$ is satisfied if and only if the inequality
\begin{align}
\label{eq:G_ineq2}
\Gdfull>0
\end{align}
holds for $\Gdfull$ as defined in \eqref{eq:gdfull} . 

Let 
\begin{align*}
\hstarrev= \max\left\{ \hstarirr, \inf_h\left\{ \kdme \leq \kdmi \right\} \right\}. 
\end{align*}
Then $\hstarrev$ is the smallest $h$ for which we can satisfy $\micromrtd=\mesomrtd$ as well as $\kdme\leq\krme$, and we have
\begin{align}
\label{eq:th2_2}
\hstarrev = \hstarinf \approx \begin{cases}
5.1\sigma \quad (2D)\\
3.2\sigma \quad (3D).
\end{cases}
\end{align}
\end{theorem}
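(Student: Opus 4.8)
The plan is to reduce the entire statement to the sign and monotonicity of $\Gdfull$. First I would substitute the mesoscopic association rate $\krme=\hhlratedfull$ from \eqref{eq:hhlrate} into the detailed-balance relation \eqref{eq:meso_k_d}. The factors $\voxsize^{d}$ and $\krmi$ cancel, leaving the compact form $\kdme=\kdmi\bigl(1+\frac{\krmi}{\diffconst}\Gdfull\bigr)^{-1}$. Since $\krmi,\diffconst,\kdmi>0$, and since the bracket must be positive for $\krme$ (hence $\kdme$) to be a well-defined rate, the condition \eqref{eq:diss_cond}, $\kdme\le\kdmi$, is equivalent to $\bigl(1+\frac{\krmi}{\diffconst}\Gdfull\bigr)^{-1}\le 1$, i.e.\ to $\frac{\krmi}{\diffconst}\Gdfull\ge 0$, i.e.\ to $\Gdfull>0$. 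This establishes the first claim \eqref{eq:G_ineq2}.

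Next I would compute $\inf_h\{\kdme\le\kdmi\}=\inf_h\{\Gdfull>0\}$. The key structural observation is that $\Gdfull$ is strictly increasing in $\voxsize$ in both dimensions: in 3D because $-\Calphathree/(6\voxsize)$ increases with $\voxsize$, and in 2D because of the $\log(\voxsize/\rradius)$ term. Hence $\{\Gdfull>0\}$ is a half-line $\voxsize>h_{0}$, where $h_{0}$ solves $\Gdfull=0$. Solving $\Gd=0$ from \eqref{eq:gdfull} gives $h_{0}=\tfrac{2\pi\Calphathree}{3}\rradius\approx 3.2\,\rradius$ in 3D and $h_{0}=\sqrt{\pi}\,e^{(3+2\pi\Calphatwo)/4}\rradius\approx 5.1\,\rradius$ in 2D, matching the numerical values quoted in \eqref{eq:th2_2}.

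The crucial identification is that this threshold $h_{0}$ equals $\hstarinf$. Recall that $\hstarirr$ was defined in \eqref{eq:hstarirr_def} as the infimum of $\voxsize$ for which $\hhlrated>0$; by the same algebra this is where $1+\frac{\krmi}{\diffconst}\Gdfull=0$, i.e.\ where $\Gdfull=-\diffconst/\krmi$. Letting $\krmi\to\infty$ sends this root to the root of $\Gdfull=0$, which is exactly $h_{0}$, so $\hstarinf=h_{0}=\inf_h\{\Gdfull>0\}$. Monotonicity of $\Gdfull$ then supplies the ordering: since $-\diffconst/\krmi<0$ for any finite $\krmi$, the root of $\Gdfull=-\diffconst/\krmi$ lies strictly to the left of the root of $\Gdfull=0$, whence $\hstarirr<\hstarinf$. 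Therefore $\hstarrev=\max\{\hstarirr,\inf_h\{\kdme\le\kdmi\}\}=\max\{\hstarirr,\hstarinf\}=\hstarinf$, which is \eqref{eq:th2_2}.

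Finally, to justify the characterization of $\hstarrev$ as the smallest admissible $\voxsize$, I would observe that matching $\micromrtd=\mesomrtd$ requires $\krme=\hhlrated$ to be a positive, well-defined rate, i.e.\ $\voxsize>\hstarirr$, while $\kdme\le\kdmi$ requires $\voxsize>\inf_h\{\Gdfull>0\}=\hstarinf$; both hold simultaneously precisely when $\voxsize$ exceeds the larger of the two, namely $\hstarrev$. The only genuine subtlety I anticipate is the boundary case $\Gdfull=0$, at which $\kdme=\kdmi$ with equality: this must be reconciled with the strict inequality in \eqref{eq:G_ineq2}. Because $\Gdfull$ is strictly monotone, this discrepancy concerns only the single point $\voxsize=\hstarinf$ and does not alter the value of the infimum, so the conclusion $\hstarrev=\hstarinf$ is unaffected.
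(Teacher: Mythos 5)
Your proposal is correct and follows essentially the same route as the paper's proof: substitute the rate \eqref{eq:hhlrate} into the detailed-balance relation \eqref{eq:meso_k_d}, reduce $\kdme\leq\kdmi$ to the sign condition on $\Gdfull$ (using that $\hhlrated>0$ excludes $1+\frac{\krmi}{\diffconst}\Gdfull<0$), and identify $\inf_h\{\kdme\leq\kdmi\}$ with $\hstarinf$ so that the maximum with $\hstarirr$ collapses to $\hstarinf$. You additionally make explicit several points the paper declares immediate --- the strict monotonicity of $\Gdfull$ in $h$, the explicit solution of $\Gdfull=0$ recovering the numerical values $3.2\rradius$ and $5.1\rradius$, the strict ordering $\hstarirr<\hstarinf$ for finite $\krmi$, and the boundary case $\Gdfull=0$ where $\kdme=\kdmi$ with equality --- all of which are sound and do not change the argument.
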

\begin{proof}
We obtain $\hstarrev$ by using \eqref{eq:meso_k_d} in \eqref{eq:diss_cond}:
\begin{align}
h^d\frac{k_d^{micro}k_r^{meso}}{k_r} \leq k_d,
\end{align}
which holds if and only if
\begin{align}
\label{eq:k_d_ineq}
\frac{h^d}{\krmi}\krme\leq 1.
\end{align}
However, from \eqref{eq:hhlrate} we have
\begin{align*}
\frac{h^d}{\krmi}\krme = \left(1+\frac{\krmi}{D}\Gdfull\right)^{-1}.
\end{align*}
Thus we find that \eqref{eq:k_d_ineq} is equivalent to
\begin{align}
\label{eq:G_cond}
\frac{\krmi}{D}\Gdfull > 0,
\end{align}
since $(\krmi/\diffconst)\Gdfull < -1$ is excluded by the condition $\hhlrated > 0$ as shown in \eqref{eq:rate_cond1}. Since $\krmi\geq 0$ and $\diffconst>0$ (we do not consider the case $\diffconst=0$ here), \eqref{eq:G_ineq2} follows. 

We now observe that $\hstarirr\leq \hstarinf$, and that, immediately from \eqref{eq:hhlrate} and \eqref{eq:hstarirr_def}, it follows that $\hstarinf = \inf_h\left\{ \Gdfull>0 \right\} = \inf_h\left\{ \kdme \leq \kdmi \right\}$. The last equality is an immediate consequence of \eqref{eq:G_ineq2}, and we obtain \eqref{eq:th2_2}.

\end{proof}
In Figure \ref{fig:hstar_vs_hstarinf} we show how $\hstarrev$ relates to $\hstarirr$.

\begin{figure}

\centering
\includegraphics[width=0.8\linewidth]{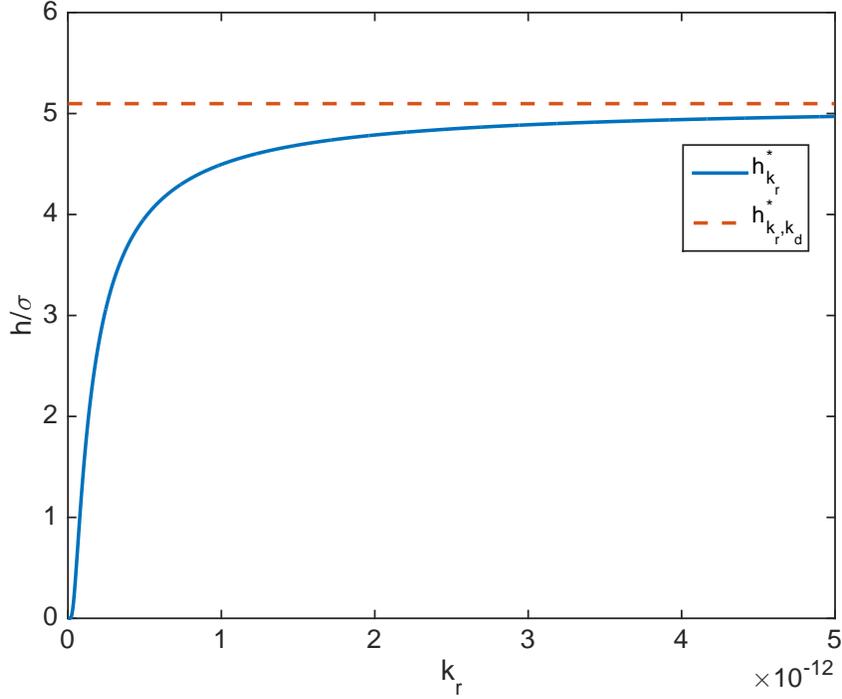}
\caption{\label{fig:hstar_vs_hstarinf}The bound $\hstarirr$ for the irreversible case in 2D depends on $\krmi$, and decreases with decreasing $\krmi$. However, in order to satisfy \eqref{eq:diss_cond}, we must have $h>\hstarrev$, shown as the red dashed line in the figure above. For large $\krmi$, $\hstarirr\approx\hstarrev$. By \eqref{eq:th2_2}, $\hstarrev = \hstarinf$ for all $\krmi$, $\kdmi>0$. The remaining parameters are given by $\sigma=2\cdot 10^{-9}$ and $D = 2\cdot 10^{-14}$.}
\end{figure}

\subsection{Some limit cases}
\label{sec:limits}
In this section we investigate the behavior of the reaction rate $\hhlratedfull$ in some limit cases. We would expect $\hhlratethree$ to behave similarly to $\kme$ for large voxels. In the limit of small voxels, it is of interest to see how the mesoscopic reaction rates relate to the microscopic reaction rates.

\begin{corollary}
For $L/\sigma\gg 1$ we have
\begin{align}
\label{eq:limit1}
\hhlrated(\krmi,\hstarinf) \approx \frac{\krmi}{(\hstarinf)^d},
\end{align}
Also, as $\voxsize\to\infty$, with $L/\voxsize$ constant and $L/\voxsize\gg 1$, we have
\begin{align}
\label{eq:limit2}
\hhlratethreefull \to \frac{\kme}{h^3}.
\end{align}
In the limit of $\krmi/D\to 0$ we obtain
\begin{align}
\label{eq:limitsmallkr}
\hhlratedfull \to \frac{\krmi}{\voxsize^d}.
\end{align}
In 3D, \eqref{eq:limitsmallkr} implies that
\begin{align}
\label{eq:limit4}
\hhlratethreefull\to\frac{\kme}{\voxsize^3}.
\end{align}
as $\krmi/D\to 0$.
\label{coll:limits}
\end{corollary}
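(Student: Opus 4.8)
The plan is to treat all four statements as direct consequences of the closed form \eqref{eq:hhlrate}, together with the definition of $\Gd$ in \eqref{eq:gdfull}, the Collins--Kimball rate \eqref{eq:ck}, and the characterization of $\hstarinf$ obtained in Theorem 2. The unifying observation is that $\hhlratedfull$ factors as the ``point-particle'' rate $\krmi/h^d$ times the correction factor $\left(1+\frac{\krmi}{D}\Gdfull\right)^{-1}$, so each limit reduces to determining the behavior of the single scalar $\frac{\krmi}{D}\Gdfull$ in the regime of interest.

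For \eqref{eq:limit1} I would recall from the proof of Theorem 2 that $\hstarinf=\inf_h\{\Gdfull>0\}$; since $\Gdfull$ is continuous and increasing in $h$ (logarithmically in 2D, and through the $-\Calphathree/(6h)$ term in 3D), this forces $G^{(d)}(\hstarinf,\sigma)=0$. Substituting $h=\hstarinf$ into \eqref{eq:hhlrate} then makes the correction factor equal to $1$, giving $\hhlrated(\krmi,\hstarinf)\approx\krmi/(\hstarinf)^d$ at once. For \eqref{eq:limit2} I would specialize to 3D, where $\Gthreefull=\frac{1}{4\pi\sigma}-\frac{\Calphathree}{6h}\to\frac{1}{4\pi\sigma}$ as $h\to\infty$; inserting this into \eqref{eq:hhlrate} yields $\hhlratethreefull\to\frac{\krmi}{h^3}\left(1+\frac{\krmi}{4\pi\sigma D}\right)^{-1}$, and multiplying numerator and denominator by $4\pi\sigma D$ together with $\rradius=\sigma$ reproduces exactly $\kme/h^3$ with $\kme$ as in \eqref{eq:ck}. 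For \eqref{eq:limitsmallkr} I would note that for fixed $h$ and $\sigma$ the quantity $\Gdfull$ is a finite constant, so $\frac{\krmi}{D}\Gdfull\to0$ and the correction factor tends to $1$, leaving $\krmi/h^d$. Finally, \eqref{eq:limit4} follows by combining the previous limit with the observation that \eqref{eq:ck} gives $\kme\to\krmi$ as $\krmi/D\to0$, so that both $\hhlratethree$ and $\kme/h^3$ collapse to the common reaction-limited value $\krmi/h^3$.

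Because the individual limits are elementary algebra, I expect the only real care to lie in respecting the regime under which \eqref{eq:hhlrate} was derived, namely $L\gg\sigma$ and $h\ll L$. This is why \eqref{eq:limit1} carries the hypothesis $L/\sigma\gg1$, and why \eqref{eq:limit2} is phrased with $L/h$ held constant and large rather than as a bare $h\to\infty$ limit: letting $h\to\infty$ with $L/h$ fixed forces $L\to\infty$, so the approximations $\micromrtthree\approx L^3/\kme$ and $h\ll L$ underlying \eqref{eq:hhlrate} remain valid throughout. The main obstacle is therefore purely expository---stating each limit at the appropriate (approximate) level of equality, consistent with the ``$\approx$'' already present in \eqref{eq:hhlrate}, rather than overclaiming exact equalities---and noting that the 2D expression retains an explicit $\log(h/\sigma)$ with no finite large-$h$ limit, which is precisely why \eqref{eq:limit2} and \eqref{eq:limit4} are confined to 3D.
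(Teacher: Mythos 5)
Your proposal is correct and follows essentially the same route as the paper's own proof: each limit is read off from the factorization $\hhlrated = \frac{\krmi}{h^d}\left(1+\frac{\krmi}{D}\Gd\right)^{-1}$, using $\Gd(\hstarinf,\rradius)=0$ for \eqref{eq:limit1}, $\Gthree\to\frac{1}{4\pi\rradius}$ as $h\to\infty$ for \eqref{eq:limit2}, the vanishing of the correction factor's perturbation for \eqref{eq:limitsmallkr}, and $\kme\to\krmi$ for \eqref{eq:limit4}. Your only additions---justifying $\Gd(\hstarinf,\rradius)=0$ via monotonicity of $\Gd$ in $h$, and flagging the $L\gg\rradius$, $h\ll L$ regime under which \eqref{eq:hhlrate} was derived---merely make explicit what the paper leaves implicit.
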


\begin{proof}
Since $\Gd(\rradius,\hstarinf)=0$, \eqref{eq:limit1} follows immediately. By noting that 
\begin{align*}
\Gthreefull \to \frac{1}{4\pi\sigma}\,\, , as\, \voxsize\to\infty,
\end{align*}
we get \eqref{eq:limit2} from some straightforward algebra. We obtain \eqref{eq:limitsmallkr} immediately from \eqref{eq:hhlrate}. Finally, \eqref{eq:limit4} follows from the fact that $\kme\to\krmi$ as $\krmi\to 0$ or $\diffconst\to\infty$.
\end{proof}

In words, as we approach the critical mesh size $\hstarinf$ in the mesoscopic model, the mesoscopic rates approach the microscopic, intrinsic rates. For large voxels in 3D, we note that $\hhlratethree$, as expected, approaches $\kme$.

As an immediate consequence we have
\begin{corollary}
For $L/\rradius\gg 1$ and $h=\hstarinf$, the following holds:
\begin{align}
\begin{cases}
\hhlrated(\krmi,\hstarinf) \approx \frac{\krmi}{(\hstarinf)^d}\\
\kdme \approx \kdmi\\
\tau_{\rm rebind}^{meso} \approx \tau_{\rm rebind}^{\rm micro}.
\end{cases}
\end{align}
\end{corollary}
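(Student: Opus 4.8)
The plan is to derive all three estimates from a single structural fact already established: at the critical mesh size $h=\hstarinf$ the geometric correction $\Gd$ vanishes, i.e. $\Gd(\hstarinf,\sigma)=0$. The first line is then nothing but \eqref{eq:limit1} read off at $h=\hstarinf$; I would simply invoke Corollary \ref{coll:limits}, observing that setting $\Gd=0$ in \eqref{eq:hhlrate} removes the correction factor $(1+\frac{\krmi}{D}\Gd)^{-1}$ and leaves $\hhlrated(\krmi,\hstarinf)\approx \krmi/(\hstarinf)^d$. This first identity is the engine driving the other two.

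For the second line I would substitute $\krme=\hhlrated(\krmi,\hstarinf)\approx\krmi/(\hstarinf)^d$ into the detailed-balance relation \eqref{eq:meso_k_d}. Written at $h=\hstarinf$ this reads $\kdme = (\hstarinf)^d\,\kdmi\,\krme/\krmi$; inserting the first line, the factors $(\hstarinf)^d$ and $\krmi$ cancel and one is left with $\kdme\approx\kdmi$. Equivalently, this is the boundary case of the characterization $\kdme\leq\kdmi \Leftrightarrow \Gdfull>0$ established earlier (cf.\ \eqref{eq:G_ineq2}): since $h=\hstarinf$ is precisely where $\Gd$ changes sign, the inequality $\kdme\leq\kdmi$ is attained with equality there.

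For the third line I would combine the mesoscopic rebinding time \eqref{eq:tau_rebind_solved} with the microscopic estimate \eqref{eq:micro_estimate}. Using the lattice relation $N=(L/h)^d$ (the domain of volume $L^d$ is tiled by $N$ voxels of volume $h^d$), \eqref{eq:tau_rebind_solved} becomes $\tau_{rebind}^{meso}=L^d/(h^d\krme)$. At $h=\hstarinf$ the product $h^d\krme\approx\krmi$ by the first line, so $\tau_{rebind}^{meso}\approx L^d/\krmi$, which is exactly the estimate \eqref{eq:micro_estimate} for $\tau_{rebind}^{micro}$, giving $\tau_{rebind}^{meso}\approx\tau_{rebind}^{micro}$.

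There is no genuinely hard step; the corollary is a bookkeeping consequence, and its real content is the observation that the three micro/meso quantities coincide \emph{simultaneously} because they all rest on the single identity $\Gd(\hstarinf,\sigma)=0$. The only point requiring care is the consistency of the $L\gg\sigma$ approximation already built into \eqref{eq:hhlrate} and \eqref{eq:micro_estimate}: each ``$\approx$'' inherits that approximation rather than introducing a new one, so I would flag explicitly that the three statements hold to the same order as those earlier estimates, and that the agreement at $h=\hstarinf$ degrades continuously as one moves away from the critical mesh size.
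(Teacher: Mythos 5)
Your proposal is correct and follows essentially the same route as the paper's own proof: the first line is read off from Corollary~\ref{coll:limits} via $\Gd(\hstarinf,\rradius)=0$, the second by substituting it into the detailed-balance relation \eqref{eq:meso_k_d}, and the third by combining \eqref{eq:tau_rebind_solved} and \eqref{eq:micro_estimate}. Your explicit use of $N=(L/h)^d$ and the remark that all three statements rest on the single identity $\Gd(\hstarinf,\rradius)=0$ merely spell out steps the paper leaves implicit.
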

\begin{proof}
The first approximation is proven in Corollary \ref{coll:limits}. Using $\hhlrated(\krmi,\hstarinf) \approx \frac{\krmi}{(\hstarinf)^d}$ in \eqref{eq:meso_k_d} we obtain $\kdme \approx \kdmi$. From \eqref{eq:tau_rebind_solved} and \eqref{eq:micro_estimate} it then follows that $\tau_{\rm rebind}^{meso} \approx \tau_{\rm rebind}^{\rm micro}$.
\end{proof}
For $\voxsize=\hstarinf$, we match the mean association time, dissociation time, and the mean rebinding time. For any $\voxsize$ below $\hstarinf$, we cannot simultaneously match both the association and dissociation times. Consequently the rebinding dynamics will be less accurately captured on the mesoscopic scale for $\voxsize<\hstarinf$ than for $\voxsize=\hstarinf$.

In Figure~\ref{fig:limits3D} we illustrate these limits in 3D for different values of the intrinsic reaction rate $\krmi$. The more diffusion limited the reaction is, the larger the discrepancy between $\hhlratethree$ and $\kme$ becomes.

\begin{figure}[htp]
\centering
\subfigure[$k_r = 10^{-18}$]{\includegraphics[width=0.75\linewidth]{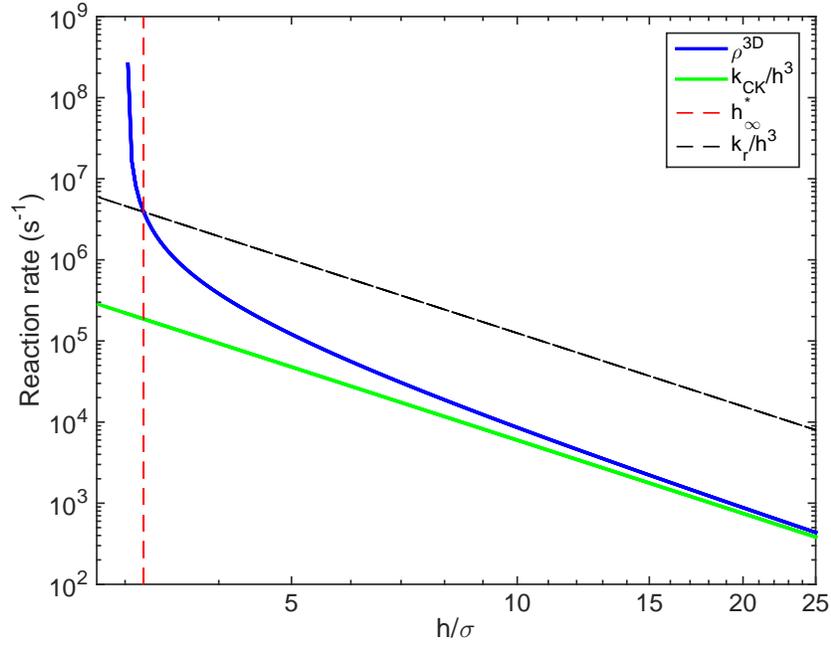}}
\subfigure[$k_r = 10^{-20}$]{\includegraphics[width=0.75\linewidth]{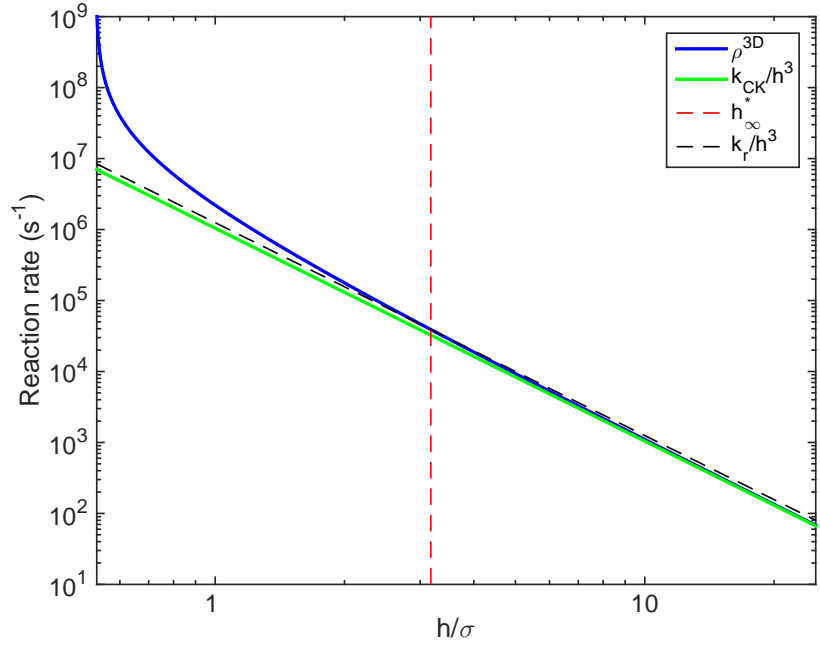}}
\caption{\label{fig:limits3D}Limits in 3D. In (a), where $k_r=10^{-18}$, the difference between $\kme$ and $\hhlrated$ is orders of magnitude; in (b), where $k_r=10^{-20}$, the difference is much smaller. The other parameters are given by $\diffconst = 2\cdot 10^{-12}$, $\rradius = 2\cdot 10^{-9}$, and $L=5.145\cdot 10^{-7}$.}  
\end{figure}

\subsection{Error estimates}

Given a system with known intrinsic reaction rates, we would ideally want to know how to choose the mesh size for sufficiently accurate mesoscopic simulations. While solving this is hard for a general system, we can choose the mesh size such that we limit the relative error in the average rebinding times. We do not guarantee an accurate mesoscopic solution by doing so, but if the error in the average rebinding time is large, and we have fine-grained dynamics that we wish to capture, we may have to reduce the voxel size to decrease the error.

Now consider a reversible reaction, with intrinsic reaction rates given by $\krmi$ and $\kdmi$.
\begin{corollary}
For a given error tolerance $\epsilon$, the following holds:
\begin{align}
%|\krmi-\krme|<\epsilon\krmi
|\krmi-h^d\hhlrated |<\epsilon\krmi
\end{align}
if and only if
\begin{align}
\voxsize\leq F(\krmi,\rradius,\diffconst,\epsilon),
\end{align}
where
\begin{align}
\label{eq:F_2D_3D}
F(\krmi,\rradius,\diffconst,\epsilon) = \begin{cases}
\frac{\Calphathree}{6}\left[\frac{1}{4\pi\sigma}+(1-(1-\epsilon)^{-1})\frac{D}{\krmi} \right]^{-1}\quad (3D)\\
\sqrt{\pi}\exp\left[ -\frac{2\pi\diffconst}{\krmi}\left(1-(1-\epsilon)^{-1}\right)+\frac{3+2\pi\Calphatwo}{4}\right]\sigma.\quad (2D)
\end{cases}
\end{align}
Furthermore, for $h\leq F$, we have
\begin{align}
\label{eq:rebind_ineq}
\frac{|\tau_{\rm rebind}^{meso}-\tau_{\rm rebind}^{micro}|}{\tau_{\rm rebind}^{micro}} \leq \epsilon+O(\epsilon^2).
\end{align}

In 2D we have 
\begin{align}
\label{eq:2D_limit}
F\to\infty, \text{ as } D/\krmi\to\infty.
\end{align}
 In 3D, we obtain
\begin{align}
\label{eq:F_limit_3D}
F\to\infty
\end{align}
as
\begin{align}
\label{eq:F_limit_3D_conds}
\begin{cases}
\frac{D}{\krmi} \to \frac{1}{4\pi\rradius\epsilon}, \text{ or}\\
\epsilon\to \frac{\kme}{4\pi\rradius\diffconst} =: \epsilon_{\rm max}
\end{cases}
\end{align}
For all $h$, it holds that
\begin{align}
\label{eq:upper_bound_error}
\frac{|\tau_{\rm rebind}^{meso}-\tau_{\rm rebind}^{micro}|}{\tau_{\rm rebind}^{micro}} < \frac{\kme}{4\pi\sigma D}+O\left( \left(\frac{\kme}{4\pi\sigma D}\right)^2 \right),
\end{align}
in 3D.
\end{corollary}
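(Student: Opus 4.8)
The plan is to collapse the entire statement onto the single dimensionless quantity $x:=(\krmi/\diffconst)\,\Gdfull$, in terms of which \eqref{eq:hhlrate} reads $h^d\hhlrated=\krmi(1+x)^{-1}$. In the admissible reversible regime $h\ge\hstarinf$ we have $\Gdfull\ge 0$ by Theorem 2, so $x\ge 0$ and $1+x>0$, and I would first compute
\[
\frac{|\krmi-h^d\hhlrated|}{\krmi}=\bigl|1-(1+x)^{-1}\bigr|=\frac{x}{1+x}.
\]
Because $\Gdfull$ is strictly increasing in $h$ (logarithmic in 2D; $\tfrac{1}{4\pi\sigma}-\tfrac{\Calphathree}{6h}$ in 3D), so are $x$ and $x/(1+x)$; hence $x/(1+x)<\epsilon$ is equivalent to $x<\epsilon/(1-\epsilon)$, i.e.\ to $\Gdfull<(\diffconst/\krmi)\,\epsilon/(1-\epsilon)$, which by monotonicity is in turn equivalent to $h\le F$. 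Solving $\Gdfull=(\diffconst/\krmi)\,\epsilon/(1-\epsilon)$ for $h$ — a linear solve in $1/h$ in 3D and an exponentiation in 2D, using $\epsilon/(1-\epsilon)=-(1-(1-\epsilon)^{-1})$ — reproduces the two branches of $F$ in \eqref{eq:F_2D_3D}.

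For the rebinding estimate I would combine \eqref{eq:tau_rebind_solved} and \eqref{eq:micro_estimate} with $N=(L/h)^d$ to get
\[
\frac{\tau_{\rm rebind}^{meso}}{\tau_{\rm rebind}^{micro}}=\frac{\krmi}{h^d\krme}=1+x,
\]
so that the relative rebinding error is exactly $|x|=x$. When $h\le F$ the first part gives $x/(1+x)\le\epsilon$, hence $x\le\epsilon/(1-\epsilon)=\epsilon+O(\epsilon^2)$, which is precisely \eqref{eq:rebind_ineq}.

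The limit statements then follow from inspecting $F$. In 2D, $F$ is an exponential whose argument $\tfrac{2\pi\diffconst}{\krmi}\tfrac{\epsilon}{1-\epsilon}+\tfrac{3+2\pi\Calphatwo}{4}$ diverges as $\diffconst/\krmi\to\infty$, giving \eqref{eq:2D_limit}. In 3D, $F\to\infty$ exactly when the bracketed factor $\tfrac{1}{4\pi\sigma}+(1-(1-\epsilon)^{-1})\tfrac{\diffconst}{\krmi}$ vanishes; solving this for $\diffconst/\krmi$ with the leading-order approximation $1-(1-\epsilon)^{-1}\approx-\epsilon$ yields the first condition in \eqref{eq:F_limit_3D_conds}, while solving the same equation exactly for $\epsilon$ gives $\epsilon=\krmi/(4\pi\sigma\diffconst+\krmi)$, which I would identify with $\kme/(4\pi\rradius\diffconst)=\epsilon_{\rm max}$ through the Collins--Kimball relation \eqref{eq:ck}. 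For the universal bound \eqref{eq:upper_bound_error}, over the admissible range $h\ge\hstarinf$ the function $\Gthree$ is increasing and bounded above by its $h\to\infty$ limit $1/(4\pi\sigma)$, so $|x|=x<\krmi/(4\pi\sigma\diffconst)$; writing $z:=\kme/(4\pi\sigma\diffconst)=y/(1+y)$ with $y:=\krmi/(4\pi\sigma\diffconst)$ and inverting gives $y=z+O(z^2)$, which converts this into the stated bound.

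The main difficulty is bookkeeping rather than any single hard step: one must keep the two closely related error measures distinct — the rate deviation $x/(1+x)$ versus the rebinding-time deviation $x$ — and track the $O(\epsilon^2)$ and $O((\kme/4\pi\sigma\diffconst)^2)$ remainders honestly, noting in particular that the first 3D limit condition is only the leading-order version of the exact threshold encoded by $\epsilon_{\rm max}$. That the condition $h\le F$ is a genuine equivalence, and not merely sufficient, rests entirely on the strict monotonicity of $\Gdfull$ in $h$, which should be verified explicitly in each dimension.
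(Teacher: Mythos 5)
Your proposal is correct and follows essentially the same route as the paper's proof: dropping the absolute value via $\Gdfull>0$, rearranging to $\Gdfull<-\tfrac{\diffconst}{\krmi}\left(1-(1-\epsilon)^{-1}\right)$ and solving for $h$ in each dimension, then converting the rate deviation into the rebinding-time bound through $\tau_{\rm rebind}^{\rm meso}=N/\krme$ and $\tau_{\rm rebind}^{\rm micro}\approx L^d/\krmi$, and reading the limits off $F$. Your explicit use of monotonicity of $\Gdfull$ for the ``if and only if,'' the clean separation of the two error measures $x/(1+x)$ versus $x$, and the exact identification $\epsilon_{\rm max}=\kme/(4\pi\rradius\diffconst)$ via the Collins--Kimball relation are the same steps the paper performs, just written out more carefully.
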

\begin{proof}
Assume that
\begin{align}
\label{eq:eps_assumption}
\left|\krmi-h^d\hhlrated\right|<\epsilon\krmi
\end{align}
holds. Using \eqref{eq:hhlrate} we obtain
\begin{align}
\label{eq:error1}
\left| 1-\left(1+\frac{\krmi}{D}\Gd \right)^{-1} \right|<\epsilon.
\end{align}
From \eqref{eq:G_cond} we know that $(\krmi/\diffconst) \Gd<0$, and thus \eqref{eq:error1} becomes
\begin{align}
\label{eq:no_abs}
1-\left(1+\frac{\krmi}{D}\Gd \right)^{-1} <\epsilon.
\end{align}
Some straightforward algebra now yields
\begin{align}
\label{eq:G_ineq}
\Gd<-\frac{D}{\krmi}\left( 1-(1-\epsilon)^{-1}\right).
\end{align}
By inserting $\Gd$, for $d=2$ and $d=3$, into \eqref{eq:G_ineq} and solving for $h$, we obtain \eqref{eq:F_2D_3D}.

We obtain \eqref{eq:rebind_ineq} by noting that $\krmi \approx L^d/\tau_{rebind}^{micro}$ and $\krme = N/\tau_{rebind}^{meso}$, and using that in \eqref{eq:eps_assumption}, together with the observation that $(1-(1-\epsilon)^{-1}) = -(\epsilon+O(\epsilon^2))$. 

As an immediate consequence of \eqref{eq:F_2D_3D} we obtain \eqref{eq:2D_limit}. We get \eqref{eq:F_limit_3D}-\eqref{eq:F_limit_3D_conds} from \eqref{eq:F_2D_3D}, and by noting that
\begin{align}
\frac{1}{4\pi\sigma}+(1-(1-\epsilon)^{-1})\frac{D}{\krmi}\to 0
\end{align}
as
\begin{align}
\epsilon\to \frac{\kme}{4\pi\rradius\diffconst}.
\end{align}
Finally we get \eqref{eq:upper_bound_error} immediately from \eqref{eq:rebind_ineq} and \eqref{eq:F_limit_3D}-\eqref{eq:F_limit_3D_conds}.
\end{proof}

We see that as the reactions become very diffusion limited, the difference between the mesoscopic and microscopic reaction rates can grow large, since $\kme\to 4\pi\sigma D$ as $\krmi\to\infty$. The less diffusion limited a reaction is, the closer the reaction rates will be (and the difference is bounded). This makes intuitive sense, since less diffusion-limited reactions mean that the system is more well-mixed; thus the system can be accurately simulated on a coarser mesh.

In Section \ref{sec:MAPK_example} we demonstrate how this theory can be applied to increase the understanding of the behavior of a relevant biological system.

\section{Numerical experiments}

In this section we present two numerical examples that will demonstrate the scope of validity of the mesoscopic reaction rates derived above. In the first example we consider the rebinding time of a pair of molecules. We compute the distributions and compare mesoscopic results for varying $h$ to microscopic simulations.

In the second example we study a model proposed by Takahashi et al. in \cite{TaTNWo10}. It was shown to have fine-grained dynamics, captured at the microscale but not at the deterministic level; we will simulate it at the mesoscopic scale, and show that as the mesh size $\voxsize$ approaches $\hstarinf$, the mesoscopic and microscopic scales agree.

\subsection{Rebinding-time distribution}

Consider one molecule of species $A$ and one molecule of species $B$, subject to a reversible reaction
\begin{align} 
A+B \underset{\kdmi}{\overset{\krmi}{\rightleftharpoons}} C.
\label{eq:reversible}
\end{align}
in a cubic domain of width $L$.

We showed in Section \ref{sec:limits} that for $h=\hstarinf$ the mean rebinding time at the mesoscopic scale will agree with the mean rebinding time at the microscopic scale. This does not, however, automatically guarantee that this particular choice of mesh size will yield the best agreement between the distributions of the rebinding times at the different scales.

In Figure~\ref{fig:rebind-dist-3D-2D} we plot the distribution of the rebinding times for different mesh sizes, and compare to the distribution obtained by simulations at the microscopic scale. For $\voxsize=\hstarinf$ we get a distribution that matches the microscopic results well for $t\gtrsim h^2/(2D)$. For other values of $h$, we get a distribution shifted relative to the microscopic distribution. For $t\lesssim h^2/(2D)$, the microscopic simulations behave differently than the mesoscopic ones, regardless of mesh size. This is due to the fact that at this timescale the spatial resolution is coarser than the temporal resolution, thus being the limiting factor for the accuracy. During a time $t$, a molecule diffuses an average distance proportional to $\sqrt{2Dt}$ in each direction. Thus, for $t\lesssim h^2/(2D)$, the distance the molecule diffuses is less than the size $\voxsize$ of a voxel.

On the mesoscopic scale, the distribution of the rebinding time is approximately exponential at short time scales (time scales smaller than the time it takes for a molecule to diffuse on average the distance of a voxel.) That gives the first plateau in Figure 3. At longer time scales the diffusion causes the rebinding time not to be exponential. In this region the mesoscopic simulations behave similar to the microscopic. At longer time scales, the distribution is again approximately exponential; once the molecules have become approximately well-mixed in the enclosing volume the assumption of an exponential reaction rate is quite accurate. This is seen as the second plateau in Figure 3.

In Figure~\ref{fig:rebind-dist-3D} we plot the mean rebinding time as a function of $h$, and show that for $h=\hstarinf$ the mean rebinding times match between the different scales.
 
\begin{figure}[htp]

\centering

\subfigure[Rebinding-time distributions in 3D.]{\includegraphics[width=0.6\linewidth]{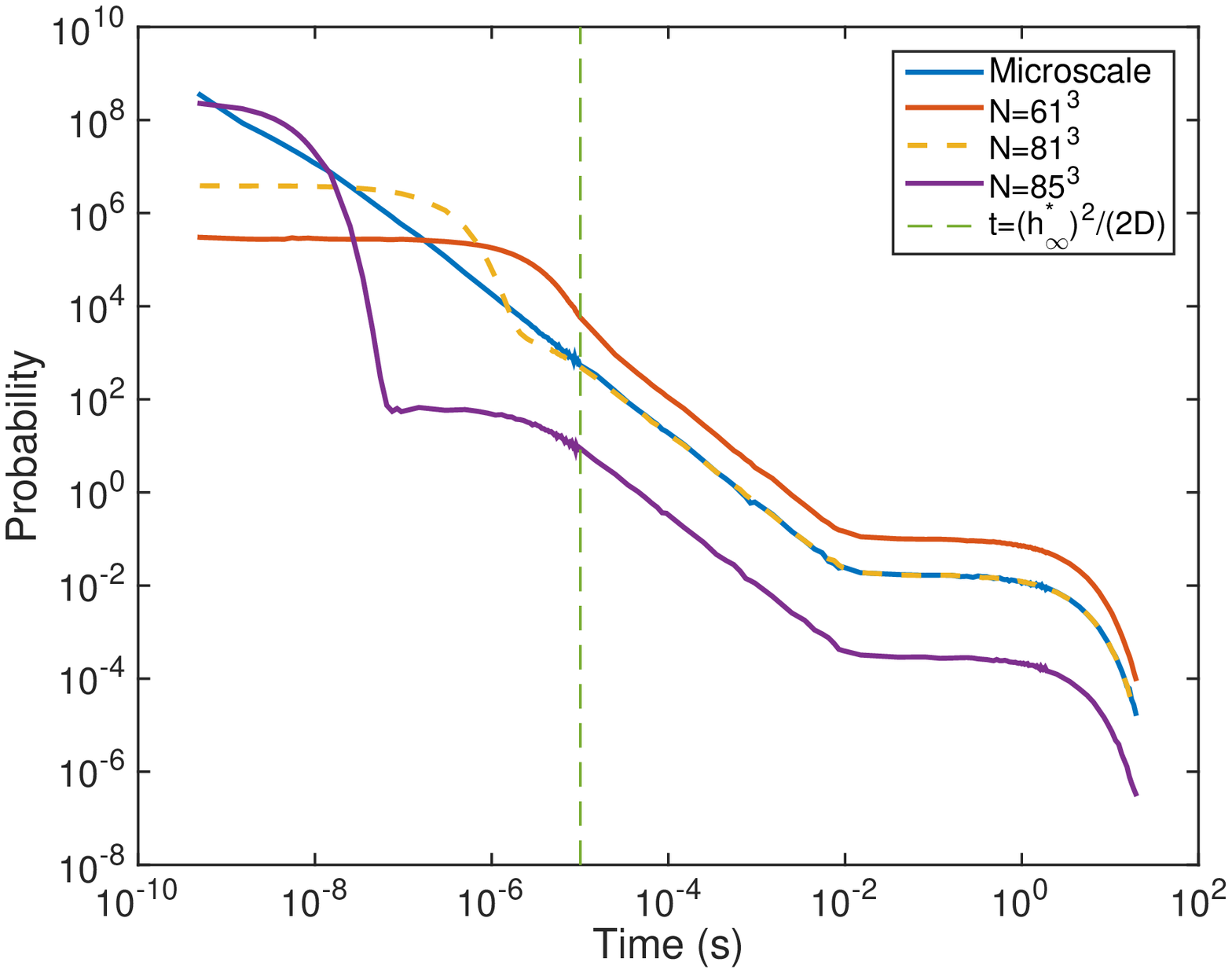}}
\subfigure[Rebinding-time distributions in 2D]{\includegraphics[width=0.6\linewidth]{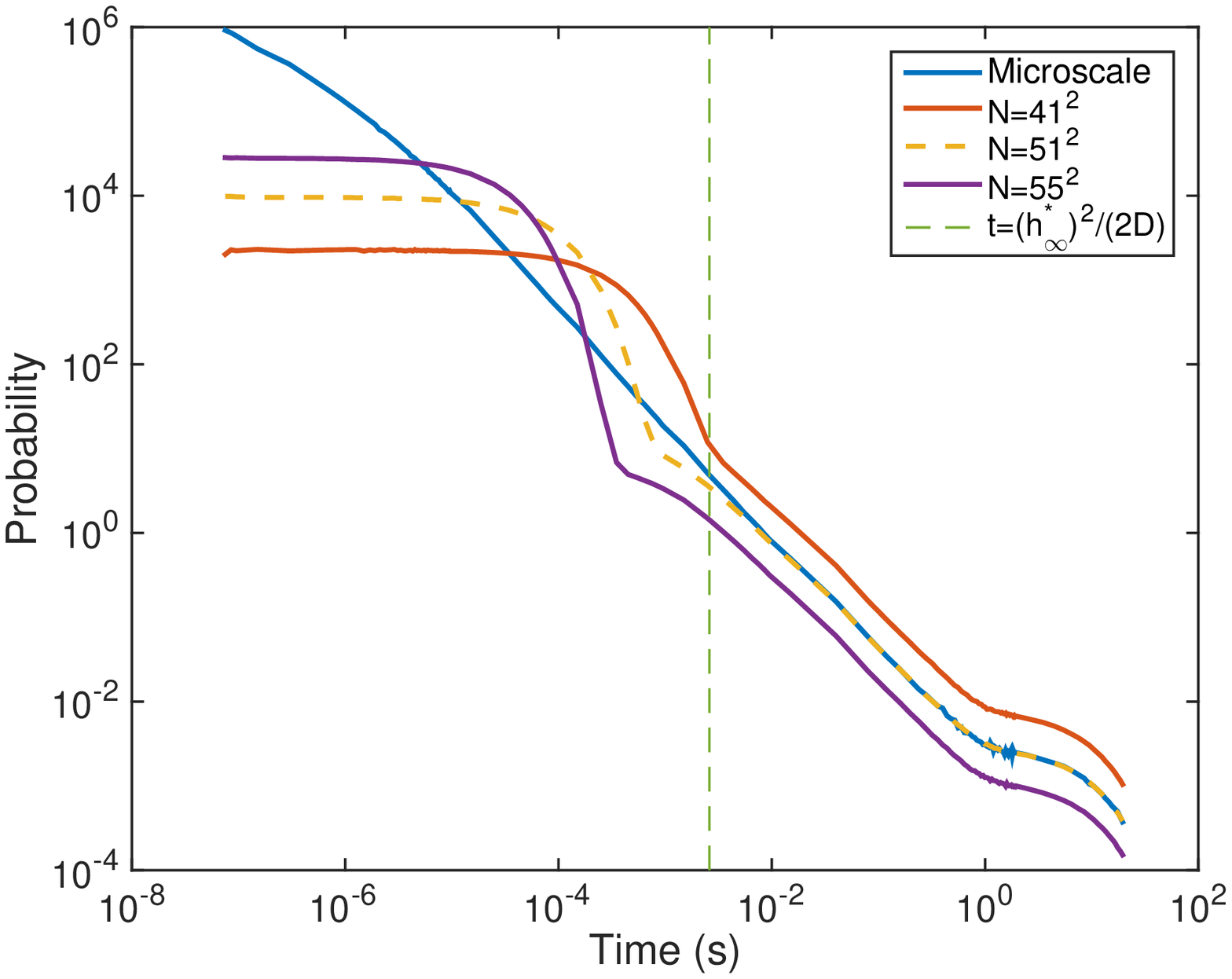}}
\caption{\label{fig:rebind-dist-3D-2D}The width $L=5.145\cdot 10^{-7}$ of the domain has been chosen such that $h\approx \hstarinf$ for $N=81^3$ in (a). In (b), $L=5.2\cdot 10^{-7}$ so that $h\approx\hstarinf$ for $N=51^2$. The mesoscopic rebinding-time distribution matches the microscopic rebinding-time distribution well for $h=\hstarinf$ and $t\gtrsim (\hstarinf)^2/(2D)$. Refining the mesh further, we find that the mean rebinding time decreases, and that the distribution is shifted correspondingly. For coarser meshes, the mean rebinding time increases, and consequently the distribution is shifted in the opposite direction. In (a), the other parameters are given by $\sigma=2\cdot 10^{-9}$, $D = 2\cdot 10^{-12}$, and $\krmi = 10^{-18}$. In (b), the parameters are $\sigma = 2\cdot 10^{-9}$, $D=2\cdot 10^{-14}$, and $\krmi=10^{-12}$.}
\end{figure}

\begin{figure}[htp]
\centering

\subfigure[Mean rebinding time in 3D.]{\includegraphics[width=0.6\linewidth]{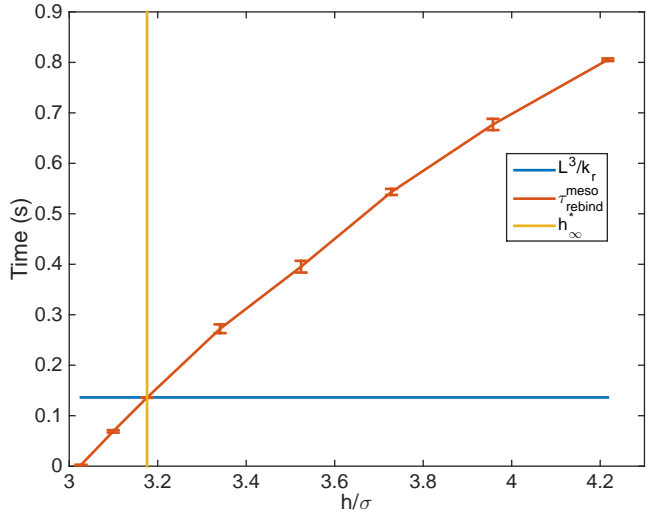}}
\subfigure[Mean rebinding time in 2D.]{\includegraphics[width=0.6\linewidth]{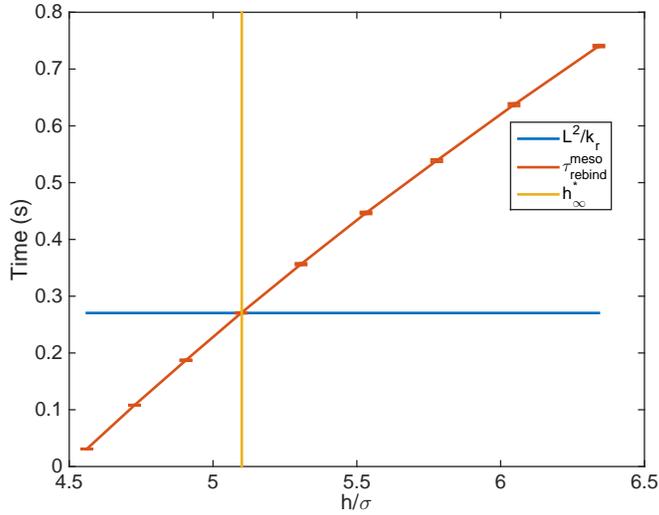}}
\caption{\label{fig:rebind-dist-3D}The mean rebinding times in 3D (a) and 2D (b) as a function of the voxel size $h$. For $h>\hstarinf$ the rebinding time is overestimated, while for $h<\hstarinf$ it is underestimated. We match the mean rebinding time perfectly for $h=\hstarinf$. The parameters are the same as in Figure \ref{fig:rebind-dist-3D-2D}.}
\end{figure}

\subsection{MAPK cascade}
\label{sec:MAPK_example}
An example of when small errors in the transient dynamics of bimolecular equilibration processes can have a large impact on the system's dynamics was given in \cite{TaTNWo10} to illustrate the need of the microscale resolution provided by the GFRD algorithm. The model considered is two steps of the omnipresent mitogen activated phospatase kinase (MAPK) cascade. Here, a transcription factor $MAPK$ is phosphorylated in two steps by a kinase $MAPKK$ and dephosphorylated by a phosphatase $P$:  

\begin{align}
MAPK + MAPKK \overset{k_1}{\underset{k_2}{\rightleftarrows}} MAPK\_MAPKK \\
MAPK\_MAPKK  \xrightarrow{k_3} MAPKK^* + MAPK_p\\
MAPKK^* \xrightarrow{k_7} MAPKK\\
MAPK_p + MAPKK \overset{k_4}{\underset{k_5}{\rightleftarrows}} MAPK_p\_MAPKK \\
MAPK_p\_MAPKK  \xrightarrow{k_6} MAPKK + MAPK_{pp}\\
MAPK_{pp} + P \overset{k_1}{\underset{k_2}{\rightleftarrows}} MAPK_{pp}\_P \xrightarrow{k_3} P^* + MAPK_{p}\\
P^* \xrightarrow{k_7} P\\
MAPK_{p} + P \overset{k_4}{\underset{k_5}{\rightleftarrows}} MAPK_{p}\_P \xrightarrow{k_6} P + MAPK
\end{align}

During the phosphorylation and dephosphorylation steps, the kinase and phosphatase turns into an inactivated form $MAPKK^*$ and $P^*$.  This can model e.g. a conformation change due to conversion of ATP to ADP, resulting in the need to reactivate the enzymes before proceeding with the next reaction. If the timescale for this reactivation step is short and the system very diffusion limited, rapid rebinding of $MAPKK$ to the newly phosphorylated molecules can have a big impact on the overall system dynamics, as illustrated and discussed from a biological perspective in \cite{TaTNWo10}. Numerically, this means that the system is very challenging to simulate with lattice based methods, due to the need for very fine spatial resolution in order to resolve the rebindings on the fast timescale. This was noted by Fange et al. in \cite{FBSE10}, where length scale dependent rates were derived based on the ansatz that the equilibration time should match on the two scales for a spherical discretization. They managed to resolve the microscale dynamics of the model by using these propensities in combination with extending the RDME to allow for reaction events between molecules occupying neighboring voxels as well as molecules occupying the same voxel. Without that extension they were not able to resolve the microscale dynamics; below we show that with reaction rates as defined in \eqref{eq:hhlrate} and \eqref{eq:meso_k_d} we are able to resolve the microscale dynamics without considering a non-local extension of the RDME.

In Figure \ref{fig:mapk_new} we show the results obtained when simulating this model using our local rates for different mesh resolutions. As can be seen, when $h$ is close to $\hstarinf$, we obtain a good approximation of the results of the GFRD algorithm from \cite{TaTNWo10}. In the figure, we show the time $\tau_{\rm res}$ until half-activation (i.e. the time to reach half the steady state level of $MAPK_{pp}$) for varying diffusion constants, making the system range from reaction limited to diffusion limited. As can be seen, for large values of $D$, as expected, the rates proposed here and the rates of Collins and Kimball give similar results, but for the strongly  diffusion limited cases, our rates result in a much better agreement with the microscale model. Notably, for small $D$ we obtain comparable accuracy to using $k_{CK}$ with $h=h_{\infty}^*$ for $h=4h_{\infty}^*$, resulting in simulations that run approximately sixteen times faster, due to the $\mathcal{O}(h^{-2})$ scaling of the computational time.

In Figure \ref{fig:upper_h}a we have computed the upper bound on $h$ obtained by requiring that the relative error in the average rebinding time is bounded by $0.05$. As expected, we can see that for the less diffusion limited cases, when $D/\krmi$ is larger, we can choose the voxel size larger and still obtain accurate results. As $D$ becomes even larger, the well-mixed assumption will be satisfied, and the system can be simulated at a much coarser scale. For smaller values of $D$ the restriction on $h$ is quite severe, and we are required to approach $\hstarinf$ in order to accurately simulate the system. This has an implication for the computational complexity of the simulations; a small $D$ makes the system less stiff, meaning it becomes less computationally challenging, but also forces us to choose a smaller voxel size, while a large $D$ makes the system more stiff, but at the same time allows for a larger voxel size. In Figure \ref{fig:upper_h}b we have computed the maximum error in the average rebinding time, as a function of $D$ but independent of the size $\voxelsize$ of the voxels.

\begin{figure}[htp]
\centering
\includegraphics[width=0.95\linewidth]{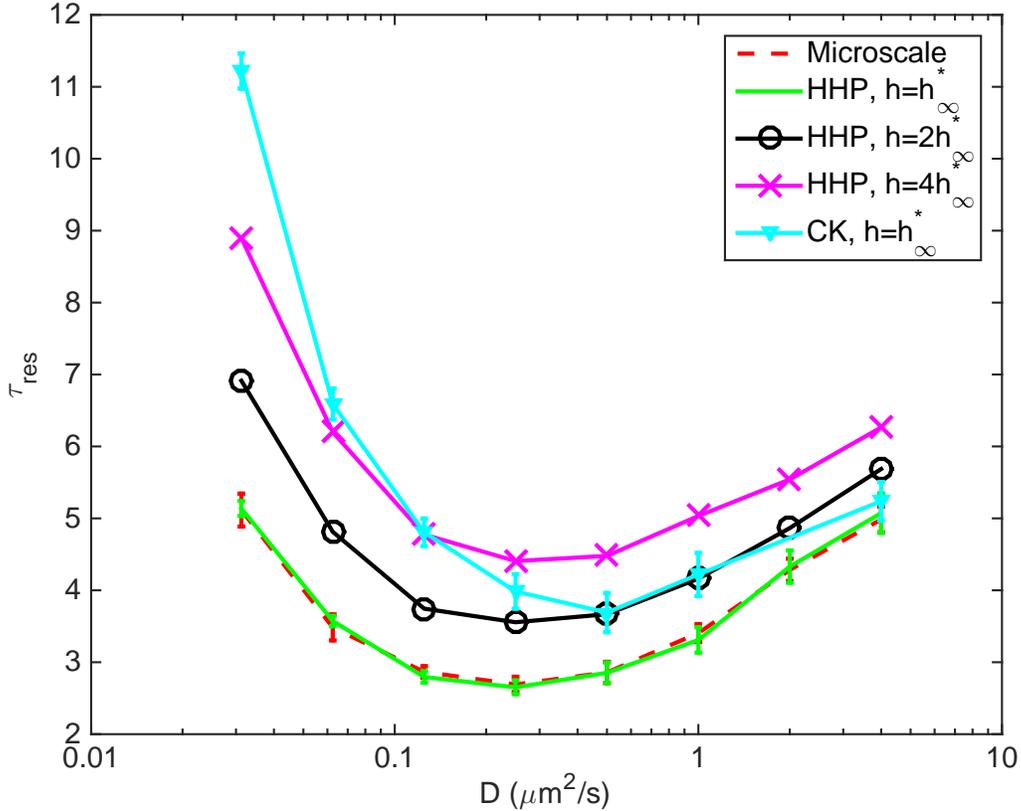}
\caption{\label{fig:mapk_new}We compare the results of mesoscopic simulations using both our proposed multiscale reaction rates (HHP), as well as the classical rates by Collins and Kimball (CK), for different sizes of the mesh. As we can see, for coarser meshes, while capturing the qualitative behavior, we are still not reproducing the microscopic GFRD results accurately. As we refine the mesh and approach $h=\hstarinf$, we approach the microscopic results. The parameters are chosen as in \cite{TaTNWo10}.}
\end{figure}

\begin{figure}[htp]
\centering
\subfigure[Upper bound on $h$ for $\epsilon = 0.05$.]{\includegraphics[width=0.45\linewidth]{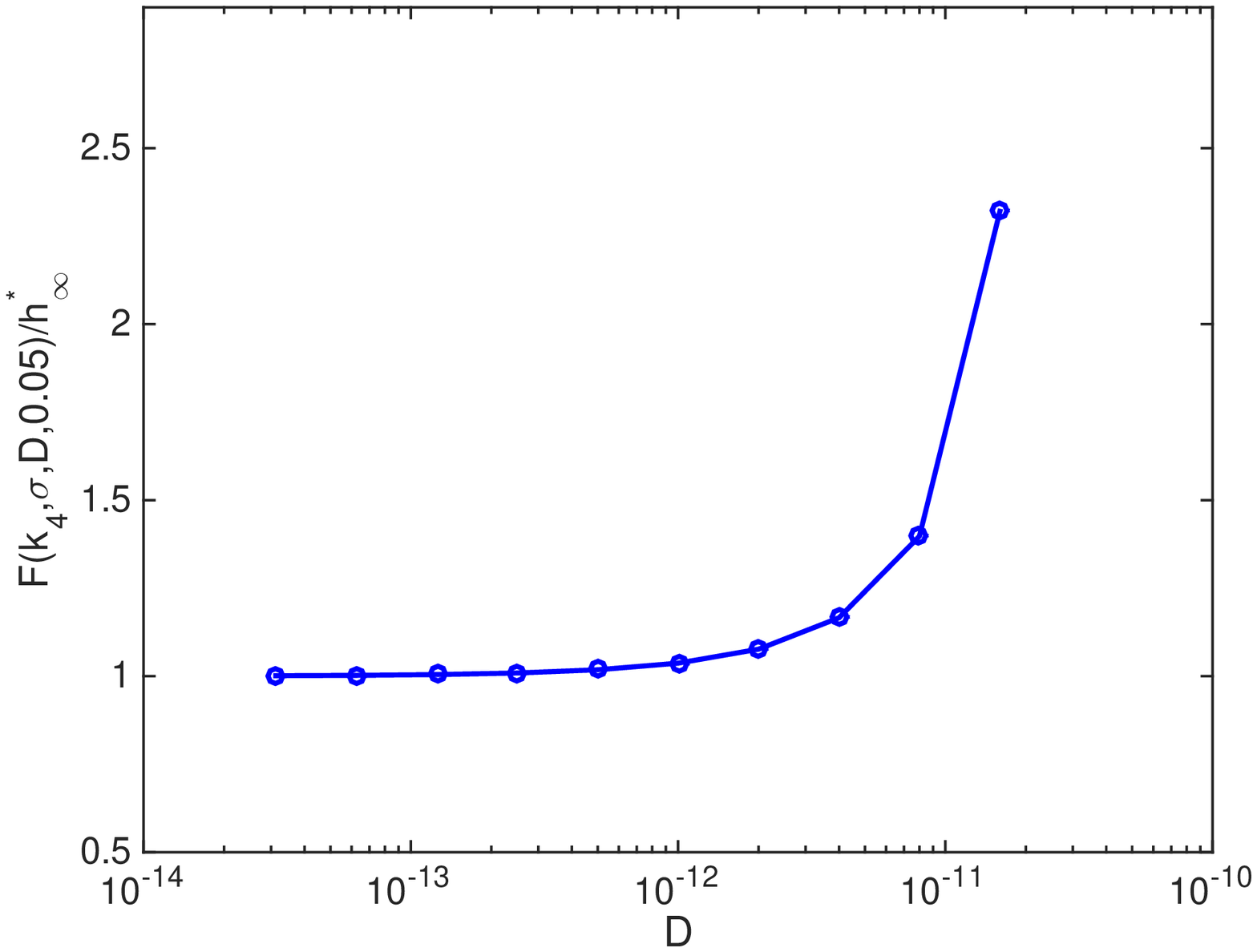}}
\subfigure[$\epsilon_{\rm max}(\diffconst)$]{\includegraphics[width=0.45\linewidth]{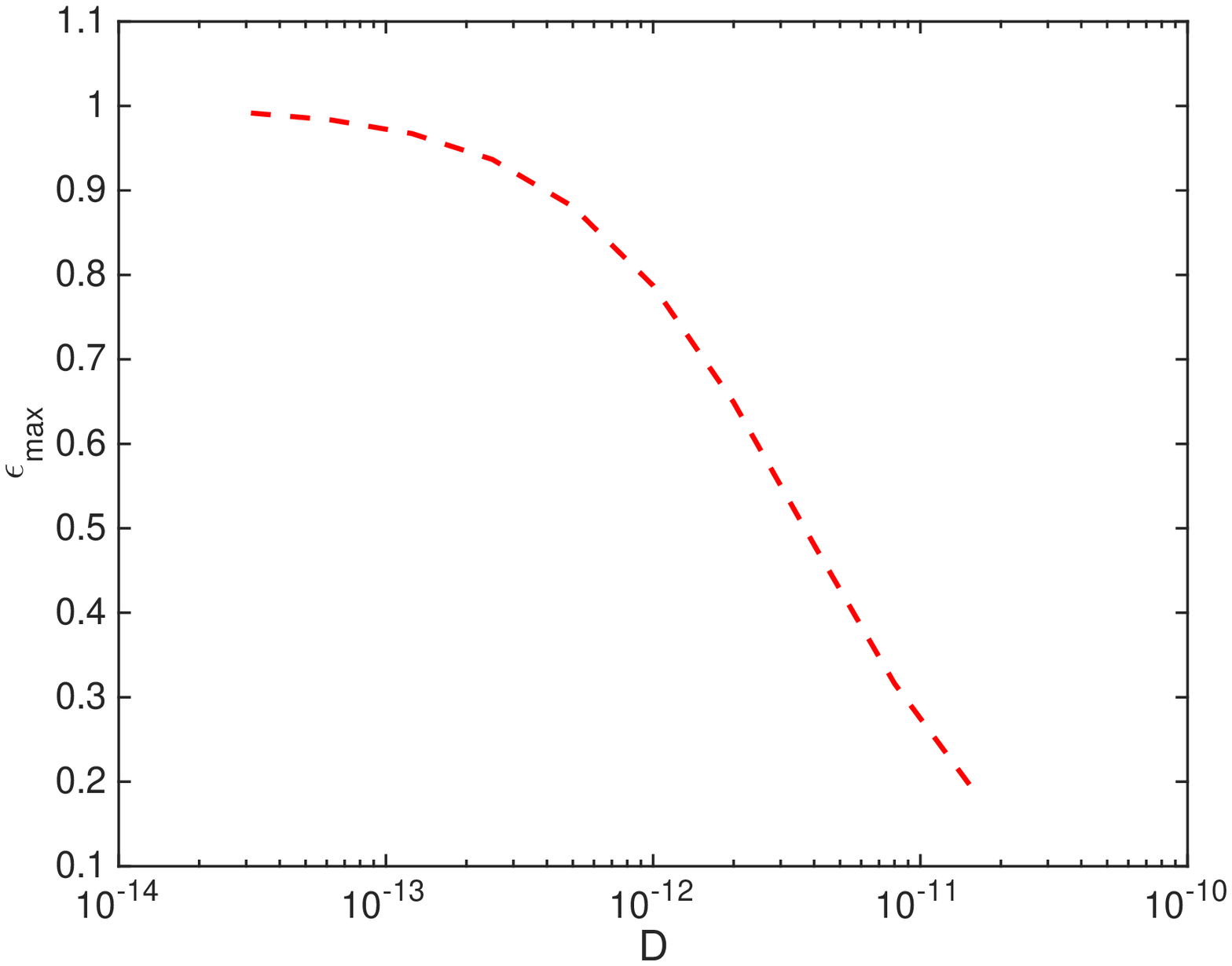}}
\caption{\label{fig:upper_h}As we can see in (a), the restriction on the voxel size $h$ is severe for small $D$. This is seen in Figure \ref{fig:mapk_new} by noting the relatively large error even for smaller $h$. For larger values of $D$, the error is smaller. In accordance with that observation, we see in (a) that the relative error in mean rebinding time decreases with increasing $D$. In (b) we have plotted $\epsilon_{\rm max}$, the maximum error in average rebinding time, as a function of $D$; for larger values of $D$ we note that the maximum relative error in the mean rebinding time is bounded by around $0.19$. This shows that with increasing $D$, as the system gets more and more well-mixed, coarser methods will yield acceptable results. In \cite{TaTNWo10} they show that the microscopic simulations agree with deterministic methods for large enough $D$.}
\end{figure}

\section{Discussion}

As we have seen, by taking a multiscale approach and deriving reaction constants by matching certain statistics of the Smoluchowski model, more accurate simulations can be obtained compared to the classical approach using rates from Collins and Kimball. An important reason why this is possible is that we start out with a given discretization of space, and then derive scale-dependent multiscale propensities (discretization-first), while the CK-rate was not derived with a mesh in mind, and hence the propensity depends only the volume of the voxels and not their shapes. As a consequence, we can better approximate binding times for bimolecular reactions that need high spatiotemporal resolution.  

While it is important to be able to accurately resolve the reaction kinetics of a given diffusion limited system, it may also be important to accurately resolve complex geometries and external and internal boundaries, modeling for example cell membranes. In this case, uniform Cartesian grids have distinct drawbacks compared to unstructured triangular and tetrahedral discretizations in that they require more voxels in order to resolve the boundaries \cite{EnFeHeLo, URDME_BMC} resulting in unnecessarily long computational time. While we expect fundamental limits for a triangular and tetrahedral discretization to be close to the ones obtained herein for Cartesian grids,  whether it will be possible to extend the approaches taken here to obtain sharp estimates and reaction rates also for unstructured meshes has yet to be seen.  Another approach to the problem of making an RDME-type model approximate a microscopic model is taken in \cite{Isaacson}, where a mesoscale model is constructed by discretizing the Doi model \cite{SamConvergent}. This approach seems more directly amenable to be used on general grids, but it has not yet been applied to the Smoluchowski model, and except for the case of irreversible, perfect absorption \cite{SamDoiSDLR}, the relationship between the Doi and Smoluchowski models is not well understood.  

We have illustrated in numerical examples that by using the propensities proposed here it is possible to accurately simulate the MAPK system discussed in \cite{TaTNWo10} on the mesoscale using a purely local RDME implementation, and we have shown theoretically why $\hstarinf$ is the optimal mesh size. The same system has previously been successfully simulated with an RDME-type model by Fange et al. \cite{FBSE10}, using another set of multiscale propensities and by relying on a non-local implementation of the RDME in which molecules occupying adjacent voxels are allowed to react. Relying on neighbor interactions leads to an increased computational cost due to the increased number of updates in each step of the kinetic Monte Carlo algorithm and hence we should expect the propensities derived here to provide a computational speed advantage for comparable accuracy. 

Although more efficient than a non-local implementation, the simulations herein require a uniformly fine mesh and hence expensive simulations for systems that require very high spatial resolution. Unless there are species in the model present in high copy numbers which would cause microscopic simulation to become very time consuming, it is not unlikely that an efficient implementation of e.g. GFRD is more efficient than the purely mesoscopic simulation. With this in mind, for very diffusion limited systems with multiscale properties, a compelling approach is the use of hybrid methods. Such a method, blending the RDME and GFRD algorithms, has previously been proposed by the present authors \cite{HeHeLo}, in which it was demonstrated that an accurate hybrid simulation of the MAPK model \cite{TaTNWo10} can yield accurate results with only a small part of the system simulated on the microscopic scale. However, there are outstanding challenges in making such hybrid methods easy to use by practitioners. Dynamic and adaptive partitioning of the system into microscopic and mesoscopic parts according to accuracy requirements is needed both for computational efficiency and for robustness of the simulator. In this paper we advance the fundamental theoretical understanding of the RDME when viewed as an approximation to the Smoluchowski model, something that is a prerequisite to develop  adaptivity criteria for hybrid methods.  

\section{Acknowledgment}
The authors thank Per L{\"o}tstedt for constructive comments on the manuscript.
This work was funded by NSF award DMS-1001012, NIGMS of the NIH under award R01-GM113241, Institute of Collaborative Biotechnologies award W911NF-09-0001
from the U.S. Army Research Office, NIBIB of the NIH
under award R01-EB014877, and U.S. DOE award DE-
SC0008975. The content of this paper is solely the responsibility of the authors and does not necessarily represent the
official views of these agencies.

\newcommand{\noopsort}[1]{}

\end{document}